\newtheorem{definition}{Definition}
\newtheorem{proposition}{Proposition}
\newtheorem{theorem}{Theorem}
\newtheorem{example}{Example}
\newtheorem{corollary}{Corollary}
\newcommand{\gf}{\mathop{\mathrm{GF}}}
\newcommand{\rk}{\mathop{\mathrm{rk}}}
\newcommand{\cl}{\mathop{\mathrm{cl}}\nolimits}
\newcommand{\pbd}{\mathop{\mathrm{PBD}}}
\newcommand{\mr}{\mathop{\mathrm{mr}}}
\newcommand{\Mr}{\mathop{\mathrm{Mr}}}
\newenvironment{proof}{\textbf{Proof}}{$\Box$\\}
\begin{document}
\title{Combinatorial Representations}
\author{Peter J. Cameron, Maximilien Gadouleau and S\o ren Riis\\
School of Mathematical Sciences (PJC)\\
School of Electronic Engineering and Computer Science (MG, SR)\\
Queen Mary, University of London\\
Mile End Road\\
London E1 4NS, UK}

\maketitle

\begin{abstract}
This paper introduces combinatorial representations, which generalise the notion of linear representations of matroids. We show that any family of subsets of the same cardinality has a combinatorial representation via matrices. We then prove that any graph is representable over all alphabets of size larger than some number depending on the graph. We also provide a characterisation of families representable over a given alphabet. Then, we associate a rank function and a rank operator to any representation which help us determine some criteria for the functions used in a representation. While linearly representable matroids can be viewed as having representations via matrices with only one row, we conclude this paper by an investigation of representations via matrices with only two rows.
\end{abstract}

\paragraph{Keywords:} matroid; mutually orthogonal Latin square; rank function; entropy

\section{Definition and examples} \label{sec:definition}

Combinatorial representations, defined below, generalise the notion of (linear) representations
of matroids.

\begin{definition} \label{def:comb_rep}
Let $E$ be a set of $n$ elements
and $\mathcal{B}$ a family of $r$-element subsets of $E$. 

A \emph{combinatorial representation} of $(E,\mathcal{B})$ over a set $X$
is defined to be an $n$-tuple of functions $f_i:X^r\to X$ such that,
for any $r$ distinct indices $i_1,\ldots,i_r\in E$, the map from
$X^r$ to $X^r$ given by
\[
    (x_1,x_2,\ldots,x_r)\mapsto (f_{i_1}(x_1,\ldots,x_r),\ldots, f_{i_r}(x_1,\ldots,x_r))
\]
is bijective if and only if $\{i_1,\ldots,i_r\}\in\mathcal{B}$.
\end{definition}

Usually we assume $E=\{1,2,\ldots,n\}$.
We denote the map given by the displayed equation by $f_b$, where $b = \{i_1,\ldots,i_r\}$. This slight abuse of notation will not be detrimental to the rest of the paper. Remark that the cases where $r=1$ or $r=n$ are trivial.

\begin{example} \label{ex:B_22}
Let $n=4$ and $\mathcal{B}=\{\{1,2\},\{3,4\}\}$. A
combinatorial representation over a $3$-element set $\{a,b,c\}$ is
given by taking $f_1$ and $f_2$ to be the two coordinate functions
(that is, $f_1(x,y)=x$ and $f_2(x,y)=y$), and $f_3$ and $f_4$ by the
tables
\[
\begin{array}{|c|c|c|}\hline
b&a&a\\\hline b&c&b\\\hline c&c&a\\\hline
\end{array}\hbox{ and }
\begin{array}{|c|c|c|}\hline
b&b&c\\\hline a&c&c\\\hline a&b&a\\\hline
\end{array}.
\]
Note that $(E,\mathcal{B})$ is not a matroid.
\end{example}

\paragraph{Remark 1.} Suppose that $b = \{i_1,\ldots,i_r\}\in\mathcal{B}$.
Define functions $g_i$, for $i\in E$, by
\[
    g_i(x_1,\ldots,x_r)=f_i(y_1,\ldots,y_r),
\]
where $(y_1,\ldots,y_r)$ is the inverse image of $(x_1,\ldots,x_r)$
under the bijection $f_b$. These functions also
define a combinatorial representation, with the property that $g_{i_j}$
is the $j$th coordinate function. So, where necessary, we may suppose
that the first $r$ elements of $E$ form a basis and the first $r$
functions are the coordinate functions. This transformation can be viewed as a change of variables.

\paragraph{Remark 2.} The values of the functions $f_i$ are not significant; the definition
could be written in terms of the partitions of $A^r$ given by these functions:
$\pi_i=\{\{x\in A^r:f_i(x)=a\}:a\in A\}$. Thus, we require that the meet
(in the partition lattice) of $r$ partitions is the partition into singletons
if and only if the indices of these partitions form a set in $\mathcal{B}$.

\paragraph{Remark 3.} The condition that the domain of the functions is $A^r$ is also
not essential; any set of cardinality $q^r$ will do (where $q=|A|$), since
as in Remark 1 the functions corresponding to a set in $\mathcal{B}$ give
this set the structure of a Cartesian power.

\paragraph{Remark 4.} Our point of view is similar to that of experimental design in
statistics, where functions on (or partitions of) the set of experimental
units are called \emph{factors}, see~\cite[Chapter 10]{Bai08}.

To take a very simple example, let us assume that $q^2$ trees in an orchard
are laid out in a $q\times q$ square. Last year, $q$ fertilizers were applied
to the trees, using a Latin square layout, so that each fertilizer was used
once in each row and column. This year, we want to test $q$ pesticides on
the trees, again in a Latin square layout; but, because of possible interaction
between fertilizer and pesticide,  we would like each combination to occur
just once. We can regard rows, columns, fertilizers and pesticides as four
functions from the set of trees to a set of size $q$ (or four factors, each
with $q$ parts of size $q$); our requirement is that we have a representation
of the complete graph of size~$4$.

We return to this in Example~2.

\paragraph{Problem} Suppose that a set family $(E,\mathcal{B})$ has a
combinatorial representation. Is there a simple condition which
guarantees that $(E,\mathcal{B})$ is a matroid?

\begin{theorem} \label{th:linear}
A set family is a linearly representable matroid if and only if it has a combinatorial representation by linear functions.
\end{theorem}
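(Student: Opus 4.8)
The plan is to set up a dictionary between $n$-tuples of linear functionals on $\gf(q)^r$ and $r\times n$ matrices over $\gf(q)$, under which the bijectivity condition of Definition~\ref{def:comb_rep} turns into the condition that $r$ specified columns form a basis of $\gf(q)^r$ --- which is exactly the condition describing the bases of the matroid represented by that matrix. Throughout I take the alphabet $X$ to be the finite field $\gf(q)$ and read ``linear function $f_i:X^r\to X$'' as ``linear functional on $X^r$''; taking $X$ finite is no restriction here, since by the standard fact that a representable matroid is representable over some finite field one may always arrange the ground field to be finite.

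\textbf{From matroid to representation.} Suppose $(E,\mathcal{B})$ is a matroid of rank $r$ linearly represented by an $r\times n$ matrix $A$ over $\gf(q)$, so that $b\in\mathcal{B}$ if and only if the $r$ columns of $A$ indexed by $b$ form a basis of $\gf(q)^r$. For $i\in E$, let $f_i:\gf(q)^r\to\gf(q)$ be the linear functional whose vector of coefficients is the $i$th column of $A$. For $b=\{i_1,\ldots,i_r\}$, writing $A_b$ for the $r\times r$ submatrix of $A$ on the columns indexed by $b$, one checks directly that the associated map $f_b$ is $x\mapsto A_b^\top x$. Hence $f_b$ is bijective precisely when $A_b^\top$ (equivalently $A_b$) is invertible, i.e.\ precisely when the columns of $A$ indexed by $b$ form a basis, i.e.\ precisely when $b\in\mathcal{B}$. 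So the $f_i$ form a combinatorial representation of $(E,\mathcal{B})$ by linear functions.

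\textbf{From representation to matroid.} Conversely, suppose the $f_i:\gf(q)^r\to\gf(q)$ form a combinatorial representation of $(E,\mathcal{B})$ by linear functionals, and let $A$ be the $r\times n$ matrix whose $i$th column is the coefficient vector of $f_i$. Running the same computation backwards, for each $r$-subset $b$ the columns of $A$ indexed by $b$ form a basis of $\gf(q)^r$ if and only if $f_b$ is bijective, i.e.\ if and only if $b\in\mathcal{B}$. Assuming $\mathcal{B}\neq\emptyset$, this forces $A$ to have rank $r$ (it has $r$ independent columns and only $r$ rows), so $\mathcal{B}$ is precisely the basis family of the vector matroid of $A$; in particular $(E,\mathcal{B})$ is a linearly representable matroid. (The degenerate case $\mathcal{B}=\emptyset$, if admitted at all, must be handled separately, since then $(E,\emptyset)$ is not a matroid although the all-zero functionals form a linear combinatorial representation of it.)

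The argument is really a single observation --- a list of linear functionals on $\gf(q)^r$ is the same data as an $r\times n$ matrix, and ``$r$ of them are jointly bijective'' is ``the corresponding $r$ columns form a basis'' --- so I do not expect a serious obstacle. The only points needing care are the bookkeeping that matches the arity $r$ of the functions with the rank of the matroid, the transpose relating $f_b$ to $A_b$, and the observation that the representing matrix automatically has full row rank exactly when $\mathcal{B}$ is nonempty; the $\mathcal{B}=\emptyset$ boundary case is the one genuine subtlety.
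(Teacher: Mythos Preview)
Your proof is correct. The forward direction matches the paper's exactly. For the converse, however, the paper takes a different route: rather than identifying $\mathcal{B}$ wholesale with the basis family of the column matroid of $A$ as you do, it verifies the basis exchange axiom by hand --- normalising so that the functions indexed by one basis $B_1$ are the coordinate projections, then for $k\in B_2$ taking a nonzero vector in the kernel of the map defined by $B_2\setminus\{k\}$ and using a nonzero coordinate $l$ of that vector to produce the exchanged basis $B_2\setminus\{k\}\cup\{l\}$. Your argument is shorter and more conceptual, since once you observe that $f_b$ is bijective iff the corresponding columns are independent you are simply reading off the bases of a vector matroid; the paper's argument is more self-contained in that it does not appeal (even implicitly) to the fact that column matroids are matroids. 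Two small remarks: the reduction to finite $\gf(q)$ is an unnecessary detour, since nothing in your argument uses finiteness of the field; and your flagging of the degenerate case $\mathcal{B}=\emptyset$ is apt --- the paper's proof, which only checks the exchange axiom, is silent on it too.
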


\begin{proof}
A linear representation of
a matroid in an $r$-dimensional vector space over a field $F$ associates a column vector (the $i$th column of the
$r\times n$ matrix over $F$) with each element of $E$ so that a
set of columns is linearly independent if and only if the corresponding
set of elements of $E$ is independent in the matroid. If we associate with
each column vector $(a_1,\ldots,a_r)^\top$ the function from $F^r$ to $F$
given by $(x_1,\ldots,x_r)\mapsto\sum a_ix_i$, we see that the functions
defined by the columns of the matrix satisfy the requirements of the
definition of a combinatorial representation.

Conversely, a set family represented by linear functions over a
field $F$ must be a matroid. To show this, we must verify the exchange
axiom. Let $B_1,B_2\in\mathcal{B}$; as in the preceding remark, we may
assume that the elements of $B_1$ are the coordinate functions. Now consider the $r-1$ functions $f_i$ for $i\in B_2$, $i\ne k$, for some
fixed $k\in B_2$. These define a surjective function from $F^r$ to
$F^{r-1}$. Take any non-zero vector in the kernel, and suppose that its
$l$th coordinate is non-zero. Then it is readily checked that
$B_2\setminus\{k\}\cup\{l\}$ is a basis.
\end{proof}

\begin{example}\label{ex:U2n} A combinatorial representation of the uniform matroid
$U_{2,n}$ on a set of size~$r$ is equivalent to a set of $n-2$
mutually orthogonal Latin squares of order~$r$.

For suppose that $f_1$ and $f_2$ are the two coordinate functions. Then
the maps $(i,j)\mapsto(i,f_k(i,j))$ and $(i,j)\mapsto(f_k(i,j),j)$ are
bijective; so $f_k$ is a Latin square for all $k$. Also, the map
$(i,j)\mapsto(f_k(i,j),f_l(i,j))$ is bijective for $l\ne k$; so
the Latin squares $f_k$ and $f_l$ are orthogonal. The argument reverses.
\end{example}

More generally, a representation of $U_{r,n}$ over an alphabet $A$ is equivalent to an $(n,r,n-r+1)$ MDS code over $A$.

\section{All set families are representable} \label{sec:all}

In this section, we show that any family is representable over some finite alphabet by giving an explicit construction via matrix linear functions. The proof is based on first representing the uniform matroid which contains all $r$-element subsets, and then removing subsets via cartesian products of representations.

\begin{proposition}
Let $(E,\mathcal{B}_1)$ and $(E,\mathcal{B}_2)$ be families of $r$-sets,
which have representations over alphabets of cardinalities $q_1$ and
$q_2$ respectively. Then $(E,\mathcal{B}_1\cap\mathcal{B}_2)$ has a
representation over an alphabet of size $q_1q_2$.
\label{prop:cartesian}
\end{proposition}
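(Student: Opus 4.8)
The plan is to take the two given representations and combine them coordinate-wise on the product alphabet $X_1 \times X_2$, where $|X_1| = q_1$ and $|X_2| = q_2$. Suppose $\{f_i^{(1)}\}_{i \in E}$ is a representation of $(E, \mathcal{B}_1)$ over $X_1$ and $\{f_i^{(2)}\}_{i \in E}$ is a representation of $(E, \mathcal{B}_2)$ over $X_2$. For each $i \in E$, define $f_i : (X_1 \times X_2)^r \to X_1 \times X_2$ by feeding the first components of the $r$ input pairs into $f_i^{(1)}$ and the second components into $f_i^{(2)}$; that is, writing each input as $u_k = (x_k, y_k)$, set $f_i(u_1, \ldots, u_r) = \big(f_i^{(1)}(x_1,\ldots,x_r),\, f_i^{(2)}(y_1,\ldots,y_r)\big)$. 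This is essentially a "direct sum" of the two representations, and the alphabet has size $q_1 q_2$ as required.

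The key step is then to verify the if-and-only-if condition in Definition~\ref{def:comb_rep}. Fix $r$ distinct indices $b = \{i_1,\ldots,i_r\}$. Under a suitable identification $(X_1 \times X_2)^r \cong X_1^r \times X_2^r$ (reshuffling coordinates so that all first components come first), the map $f_b$ on $(X_1 \times X_2)^r$ is precisely the product map $f_b^{(1)} \times f_b^{(2)}$ acting on $X_1^r \times X_2^r$. A product of two maps between finite sets is a bijection if and only if each factor is a bijection. Hence $f_b$ is bijective if and only if both $f_b^{(1)}$ and $f_b^{(2)}$ are bijective, which by the defining property of the two given representations happens if and only if $b \in \mathcal{B}_1$ and $b \in \mathcal{B}_2$, i.e. $b \in \mathcal{B}_1 \cap \mathcal{B}_2$. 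This establishes that $\{f_i\}_{i \in E}$ represents $(E, \mathcal{B}_1 \cap \mathcal{B}_2)$.

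I do not expect a serious obstacle here; the only point requiring a little care is the bookkeeping of the coordinate identification $(X_1\times X_2)^r \cong X_1^r \times X_2^r$ and checking that it genuinely conjugates $f_b$ into $f_b^{(1)} \times f_b^{(2)}$. Everything else is the elementary fact that bijectivity of a Cartesian product of functions is equivalent to bijectivity of each component, together with a direct appeal to the definition. In the language of Remark~2, one can also phrase this as: the partition of $(X_1 \times X_2)^r$ induced by $f_i$ is the common refinement of the pullbacks of the partitions $\pi_i^{(1)}$ and $\pi_i^{(2)}$, and the meet of $r$ such refined partitions is the partition into singletons exactly when both families of $r$ original partitions already meet to singletons.
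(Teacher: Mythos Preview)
Your proposal is correct and follows essentially the same approach as the paper: both construct the new functions coordinate-wise on the product alphabet and then observe that the resulting map $f_b$ decomposes as the product $f_b^{(1)} \times f_b^{(2)}$, which is a bijection if and only if each factor is. In fact you spell out in more detail the step the paper dismisses as ``tedious but routine.''
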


\begin{proof}
Suppose that $(f_e)$ and $(g_e)$ are representations of $(E,\mathcal{B}_1)$ and
$(E,\mathcal{B}_2)$ over alphabets $A_1$ and $A_2$ respectively. Consider the functions $h_e:(A_1 \times A_2)^r\to A_1 \times A_2$ given by
\[
    h_e((a_1,b_1),\ldots,(a_r,b_r))=(f_e(a_1,\ldots,a_r),g_e(b_1,\ldots,b_r)).
\]
It is tedious but routine to show that, for any $b \subseteq E$, $h_b$
is a bijection if and only if both $f_b$ and
$g_b$ are bijections. So the functions $(h_e:e\in E)$
represent $(E,\mathcal{B}_1\cap\mathcal{B}_2)$.
\end{proof}

We can now prove the theorem.

\begin{theorem} \label{th:all}
Any family is representable over some finite alphabet by matrix linear functions.
\end{theorem}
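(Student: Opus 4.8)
The plan is to combine the previous proposition with an explicit representation of the uniform matroid $U_{r,n}$ (which contains \emph{all} $r$-subsets of $E$), and then intersect out the unwanted bases one at a time. Concretely, write $\mathcal{B} = \{b \subseteq E : |b| = r\} \setminus \{c_1, \ldots, c_m\}$, where $c_1, \ldots, c_m$ are precisely the $r$-sets not in $\mathcal{B}$. Since $\mathcal{B}$ is the intersection of the $m$ families $\mathcal{B}_j \df \{b : |b|=r\} \setminus \{c_j\}$, Proposition~\ref{prop:cartesian} (applied inductively) reduces the theorem to two tasks: representing $U_{r,n}$ by matrix linear functions, and representing each single-hole family $(E,\mathcal{B}_j)$ by matrix linear functions over some finite alphabet.

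For the first task, the standard Reed--Solomon / Vandermonde construction works: over a field $\gf(q)$ with $q \geq n$, pick distinct $\alpha_1, \ldots, \alpha_n \in \gf(q)$ and let the $i$th column be $(1, \alpha_i, \alpha_i^2, \ldots, \alpha_i^{r-1})^\top$, so that $f_i(x_1,\ldots,x_r) = \sum_{k=0}^{r-1} \alpha_i^k x_{k+1}$. Any $r$ of these columns form an invertible Vandermonde matrix, so every $r$-set is a basis; by Theorem~\ref{th:linear} (or directly), this is a combinatorial representation of $U_{r,n}$ by linear functions, which are a special case of matrix linear functions (scalars being $1\times 1$ matrices).

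For the second task I would exhibit, for a fixed $r$-set $c$, a representation of $(E, \{b : |b|=r\}\setminus\{c\})$ — i.e. a system where every $r$-subset is a basis except $c$ itself. The natural idea is to start from the Vandermonde representation above but \emph{deform} one coordinate: keep $f_i$ as the scalar linear map for $i \notin c$, and for $i \in c$ replace the scalar $\alpha_i^k$-entries by suitable matrices (acting on $A = \gf(q)^s$ for some small $s$, say $s=2$) chosen so that the block matrix indexed by $c$ is singular while any block matrix that swaps in even one outside column becomes invertible. One clean way to force exactly this: take the functions indexed by $c$ to be the coordinate functions composed with a fixed singular-but-generic linear endomorphism arrangement, or more simply use a Cartesian product of the uniform representation with a tiny representation of the single family $(c, \emptyset)$ on the ground set $c$ padded trivially — but one must check the padding does not kill other bases. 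I expect this single-hole construction to be the main obstacle: ensuring the deformation removes \emph{precisely} the one basis $c$ and no others, and doing so with matrix linear functions over a genuinely finite alphabet, requires care. Once a single-hole representation is in hand, Proposition~\ref{prop:cartesian} gives the general case, since the final alphabet size is the product $q \cdot q_1 \cdots q_m$, which is finite. It remains to note that a Cartesian product of matrix linear functions is again matrix linear (block-diagonal composition), so the representation produced for $\mathcal{B}$ is indeed by matrix linear functions, as claimed.
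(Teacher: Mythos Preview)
Your overall framework is exactly the paper's: write $\mathcal{B}$ as the intersection of the ``single-hole'' families $U_{r,n}\setminus\{c\}$ for $c\notin\mathcal{B}$, represent each of those, and combine via Proposition~\ref{prop:cartesian} (noting, as you do, that a Cartesian product of matrix linear functions is again matrix linear via a block-diagonal embedding). The $U_{r,n}$ step is only needed for the degenerate case $m=0$, and your Vandermonde construction is fine there.

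The genuine gap is the single-hole construction itself, which you explicitly flag as ``the main obstacle'' and do not resolve. Your two suggestions --- deforming the Vandermonde columns indexed by $c$ into $2\times 2$ blocks, or taking a Cartesian product with a ``tiny representation of $(c,\emptyset)$ padded trivially'' --- are not carried out, and the second one in fact cannot work as stated: any padding of the elements outside $c$ by a constant (or any non-balanced) function would make every $r$-set meeting $E\setminus c$ a non-basis in the product, destroying far more than the single basis $c$.

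The paper's resolution is both simpler and stronger than what you are attempting: the single-hole family $U_{r,n}\setminus\{c\}$ is in fact a \emph{linearly representable matroid}, so one needs only scalar linear functions, not genuine $2\times 2$ matrices. Assume without loss $c=\{1,\dots,r\}$. Over $\gf(p)$ with $p$ a prime power at least $\binom{n-1}{r-1}+1$, first pick $v_2,\dots,v_n\in\gf(p)^r$ in general linear position (any $r$ of them independent --- e.g.\ columns of a Vandermonde matrix). Then choose $v_1$ inside the hyperplane $\langle v_2,\dots,v_r\rangle$ but outside every other $(r-1)$-span $\langle v_{i_1},\dots,v_{i_{r-1}}\rangle$; such a $v_1$ exists by the counting estimate
\[
\Bigl|\langle v_2,\dots,v_r\rangle \setminus \bigcup_{\{i_1,\dots,i_{r-1}\}\ne\{2,\dots,r\}}\langle v_{i_1},\dots,v_{i_{r-1}}\rangle\Bigr| \;\ge\; p^{r-1}-\binom{n-1}{r-1}p^{r-2}\;>\;0.
\]
By construction $\{v_1,\dots,v_r\}$ is the unique dependent $r$-set, so this is a linear representation of $U_{r,n}\setminus\{c\}$. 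Plugging this into your framework completes the proof.
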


\begin{proof}
First of all, if $\mathcal{B} = U_{r,n}$, then it has a representation by linear functions. Otherwise, we can express $\mathcal{B}$ as
$$
    \mathcal{B} = \bigcap_{c \in U_{r,n} \backslash \mathcal{B}} U_{r,n} \backslash \{c\}.
$$

We now give a linear representation of $U_{r,n} \backslash \{c\}$. Without loss, let us assume $c = \{1,\ldots,r\}$. For any prime power $p \geq {n-1 \choose r-1} + 1$, there are $n-1$ vectors $v_2,\ldots,v_n$ such that $v_{i_1},\ldots,v_{i_r}$ are linearly independent for any choice of indices; moreover, there is $v_1 \in \langle v_2,\ldots,v_r \rangle$ such that $v_1 \notin \langle v_{i_1},\ldots,v_{i_{r-1}} \rangle$ for any other choice of indices, since
$$
    \left| \langle v_2,\ldots,v_r \rangle \backslash \bigcup_{i_1,\ldots,i_{r-1}} \langle v_{i_1},\ldots,v_{i_{r-1}} \rangle \right| \geq p^{r-1} - {n-1 \choose r-1}p^{r-2} > 0.
$$
These vectors thus form a linear representation.

By applying the cartesian product construction in Proposition \ref{prop:cartesian}, we obtain a matrix linear representation of $(E,\mathcal{B})$.
\end{proof}


In order to illustrate our concepts and results, we consider the family of bases $(E,\mathcal{B}_{r,k})$ for any integers $r,k$, where $E = \{0,\ldots,rk-1\}$ and
$$
    \mathcal{B}_{r,k} = \{\{0,\ldots,r-1\}, \{r,\ldots,2r-1\},\ldots, \{(k-1)r,\ldots,kr-1\}\}.
$$

The case $k=r=2$ has been already studied in Example \ref{ex:B_22}, so we assume $r \geq 3$ or $k \geq 3$ henceforth. We first give a combinatorial representation of $\mathcal{B}_{r,k}$ over $\mathbb{Z}_k$. Remark that the functions used in that representation are not matrix linear.

\begin{proposition} \label{prop:Brn}
Let $D = \{x \in \mathbb{Z}_k^r : x_0 = \ldots = x_{r-1}\}$ be the diagonal and let $\chi_D$ be its characteristic function. Then the functions
$$
    f_{rm+s}(x) = x_s + m \chi_D(x)
$$
for $0 \leq s \leq r-1$, $0 \leq m \leq k-1$ form a combinatorial representation of $\mathcal{B}_{r,k}$ over $\mathbb{Z}_k$.
\end{proposition}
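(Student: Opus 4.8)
The plan is to determine, for an arbitrary $r$-subset $b=\{i_1,\ldots,i_r\}\subseteq E$, exactly when $f_b$ is a bijection, after first putting $f_b$ into a convenient form. Each index $i_j\in\{0,\ldots,rk-1\}$ is written uniquely as $i_j=rm_j+s_j$ with $0\le s_j\le r-1$ and $0\le m_j\le k-1$, so that
\[
    f_b(x)=\bigl(x_{s_1}+m_1\chi_D(x),\,\ldots,\,x_{s_r}+m_r\chi_D(x)\bigr).
\]
Since permuting the indices $i_j$ only permutes the output coordinates of $f_b$, which does not affect bijectivity, I may reorder the $i_j$ freely. The decisive case distinction is whether the $s_j$ are pairwise distinct.

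Suppose first that the $s_j$ are all distinct; since there are $r$ of them in $\{0,\ldots,r-1\}$, after reordering we may assume $s_j=j-1$, i.e.\ $i_j=rm_j+(j-1)$. In this situation $b$ is a block of $\mathcal{B}_{r,k}$ exactly when all $m_j$ are equal. Now $\chi_D$ vanishes off the diagonal, so $f_b$ acts as the identity on $\mathbb{Z}_k^r\setminus D$, while $f_b(t,\ldots,t)=(t+m_1,\ldots,t+m_r)$ on $D$. If $m_1=\cdots=m_r=m$, this last map is the translation $(t,\ldots,t)\mapsto(t+m,\ldots,t+m)$, a bijection of $D$ onto $D$, so $f_b$ is a bijection of $\mathbb{Z}_k^r$. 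If the $m_j$ are not all equal, then $(t+m_1,\ldots,t+m_r)\notin D$, hence it is the $f_b$-image both of itself and of $(t,\ldots,t)$, and $f_b$ fails to be injective.

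It remains to rule out the case $s_j=s_{j'}$ for some $j\ne j'$; such a $b$ is never a block, since in a block the $s$-values are $0,1,\ldots,r-1$, all distinct. For every $x\notin D$ the $j$th and $j'$th coordinates of $f_b(x)$ both equal $x_{s_j}$, so $f_b$ sends the $k^r-k$ points of $\mathbb{Z}_k^r\setminus D$ into the set $\{y\in\mathbb{Z}_k^r:y_j=y_{j'}\}$ of size $k^{r-1}$; as $k^r-k>k^{r-1}$ under the standing hypothesis that $r\ge 3$ or $k\ge 3$, the pigeonhole principle shows $f_b$ is not injective. Combining the two cases, $f_b$ is bijective precisely when $b\in\mathcal{B}_{r,k}$. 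I do not foresee a genuine obstacle; the only points demanding care are the elementary inequality $k^r-k>k^{r-1}$ — which is exactly where the case $r=k=2$, already treated in Example~\ref{ex:B_22}, must be excluded — and the bookkeeping that, under the encoding $i_j=rm_j+s_j$, the blocks of $\mathcal{B}_{r,k}$ correspond to ``distinct $s_j$, common $m_j$''.
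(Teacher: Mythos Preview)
Your proof is correct and follows essentially the same decomposition as the paper: split according to whether the $s_j$ are pairwise distinct, and in the distinct case split further according to whether the $m_j$ all agree. The only noteworthy variation is in the repeated-$s_j$ case: the paper exhibits explicit collisions (splitting into $k\ge 3$ versus $k=2,\,r\ge 3$), whereas you use a uniform pigeonhole count via $k^r-k>k^{r-1}$, which is slightly cleaner and avoids that case split.
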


\begin{proof}
First, we prove that for each $b_m = \{mr,\ldots, (m+1)r-1\} \in \mathcal{B}$, $f_{b_m}$ is a permutation of $\mathbb{Z}_k^r$. Remark that $f_{b_m}(x) \in D$ if and only if $x \in D$, then it is easily shown that $f_{b_m}$ is a bijection on $D$ and also on $\mathbb{Z}_k^r \backslash D$. Second, we prove that any $f_b = (f_{m_1r + s_1}, \ldots, f_{m_r r + s_r})$ where $b \notin \mathcal{B}$ does not form a permutation of $\mathbb{Z}_n^r$. If $s_i = s_j$ for some $i \neq j$ (without loss, $s_i=s_j=0$), then if $k \geq 3$
$$
    (f_{m_ir},f_{m_jr})(0,1,0,\ldots,0) = 0 = (f_{m_ir},f_{m_jr})(0,2,0,\ldots,0),
$$
and if $k=2$, $r \geq 3$,
$$
    (f_{m_ir},f_{m_jr})(0,1,0,\ldots,0) = 0 = (f_{m_ir},f_{m_jr})(0,0,1,\ldots,0).
$$
Therefore, if $s_i = s_j$, $f_b$ is not a permutation. We now turn to the case where all $s_i$'s are distinct (without loss $s_i = i$), then there exist $i \neq j$ with $m_i \neq m_j$ and
$$
    f_b(0,\ldots,0) = (m_1,\ldots,m_r) = f_b(m_1,\ldots,m_r).
$$
\end{proof}

\section{Representations of graphs} \label{sec:graphs}

Representability is not a monotonic property of alphabet size. For example,
a representation of the complete graph on $4$ vertices is equivalent to a
pair of orthogonal Latin squares; these exist over alphabets of sizes  $3$,
$4$ and $5$ but not $6$. However, we will prove the following.

\begin{theorem} \label{th:graph}
Let $(E,\mathcal{B})$ be a graph (a family of sets of cardinality $2$). Then
$(E,\mathcal{B})$ has combinatorial representations over all sufficiently
large finite alphabets.
\label{t1}
\end{theorem}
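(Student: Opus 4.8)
The plan is to represent a graph $(E,\mathcal{B})$ by choosing, for each vertex $i\in E$, a function $f_i\colon X^2\to X$ of a very restricted shape, and then to translate the bijectivity conditions into combinatorial conditions on auxiliary objects (essentially Latin squares / MOLS) that can be satisfied once $|X|$ is large enough. Concretely, I would fix two "coordinate-like" building blocks: take $X=\mathbb{Z}_q$ (or a field $\gf(q)$) and try functions of the form $f_i(x,y)=x+c_i\,y$ or, more flexibly, $f_i(x,y)=\phi_i(x)+\psi_i(y)$ or even $f_i(x,y)=\alpha_i x+\beta_i y$ with $(\alpha_i,\beta_i)$ chosen per vertex. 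For a pair $\{i,j\}$, the map $(x,y)\mapsto(f_i,f_j)$ is a bijection of $X^2$ iff a certain $2\times2$ "determinant-like" quantity attached to $i,j$ is nonzero; so the requirement is: this quantity is nonzero exactly for edges of $\mathcal{B}$ and zero for non-edges. The first step is therefore to reduce Theorem~\ref{th:graph} to: \emph{for all large $q$, one can assign to each vertex a parameter so that a prescribed set of pairwise relations (``$=$'' on non-edges, ``$\ne$'' on edges) holds.}

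Since linear functions alone cannot do this (they only produce matroids, by Theorem~\ref{th:linear}, and a graph need not be a matroid), the second step is to enrich the construction. The natural move, paralleling Proposition~\ref{prop:Brn} and the cartesian-product idea of Proposition~\ref{prop:cartesian}, is to write $\mathcal{B}$ as an intersection of ``simpler'' graphs each of which is easy to represent, or to superimpose a small non-linear correction (a characteristic-function bump on a diagonal, as in Proposition~\ref{prop:Brn}) onto a linear skeleton. For instance: start from the complete graph $K_n$ represented by $n$ MOLS of order $q$ (Example~\ref{ex:U2n}), which exist for all prime powers $q\ge n$; then, to \emph{delete} a prescribed non-edge $\{i,j\}$, I would perturb $f_i$ (or $f_j$) on a small set so that $(f_i,f_j)$ collides on two points while all bijections for the edges we want to keep are preserved. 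Iterating this deletion over all non-edges, controlling that later perturbations do not resurrect earlier collisions nor destroy wanted bijections, yields the graph. One clean way to organise the bookkeeping is again Proposition~\ref{prop:cartesian}: represent $K_n$ minus a single edge over some alphabet (a finite-alphabet task, solvable for all large $q$ by a counting/greedy argument like the one in Theorem~\ref{th:all}), and intersect these representations over all missing edges; the alphabet size grows but stays finite, and "all sufficiently large" follows by padding with extra MOLS coordinates.

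The main obstacle is the interaction between the deletions: making $(f_i,f_j)$ fail to be a bijection while simultaneously keeping $(f_i,f_k)$ bijective for every retained edge at $i$, and doing this consistently for \emph{all} non-edges at once. With purely linear $f_i=\alpha_i x+\beta_i y$ this is impossible in general (the pattern of vanishing $2\times2$ determinants is matroidal), so the argument must genuinely use the extra freedom of nonlinear functions or of larger product alphabets, and one must prove a quantitative lemma of the form: if $q$ (resp.\ the product alphabet) is large compared to a function of $|E|$ and the number of non-edges, a suitable assignment exists — a union-bound / polynomial-non-vanishing estimate in the spirit of the inequality $p^{r-1}-\binom{n-1}{r-1}p^{r-2}>0$ used in Theorem~\ref{th:all}. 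The remaining steps — verifying that each $f_i$ is well defined, that for $r=2$ the bijectivity of $(f_i,f_j)$ is equivalent to the stated combinatorial condition, and that the cartesian product in Proposition~\ref{prop:cartesian} behaves as claimed — are routine. I expect the proof to conclude by combining the single-edge-deletion representation with Proposition~\ref{prop:cartesian} and then absorbing any prime-power constraints by the fact that prime powers are cofinal, so the conclusion holds for all sufficiently large alphabet sizes.
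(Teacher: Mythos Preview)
Your outline correctly identifies the building blocks the paper also uses --- represent $K_n$ linearly, represent $K_n$ minus one edge by duplicating a function, and then obtain an arbitrary graph as an intersection via the cartesian product of Proposition~\ref{prop:cartesian}. That part is fine and gives a representation over \emph{some} alphabet size $N$, and then over any size of the form $Nq_1\cdots q_k$ with each $q_i$ a prime power at least $n$. The genuine gap is the last sentence: ``prime powers are cofinal, so the conclusion holds for all sufficiently large alphabet sizes.'' Cofinality is far weaker than what you need. The set of sizes you produce is a multiplicative semigroup generated by $N$ and the prime powers $\ge n$; such a set misses infinitely many integers (for instance every large prime $p<n$ appearing to the first power in a factorisation, or any integer all of whose prime factors are $<n$ and which is not a multiple of $N$). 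Representability is not known to be monotone in $q$ --- indeed the paper reminds you that $K_4$ is representable over $3,4,5$ but not $6$ --- so ``padding with extra MOLS coordinates'' multiplies the alphabet, it does not fill in the gaps.

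The missing idea is an additive, not multiplicative, closure property. The paper introduces \emph{idempotent} representations ($f_e(x,x)=x$) and shows two things you do not have: (i) idempotence is preserved under the cartesian product, and (ii) the set $K$ of alphabet sizes admitting an idempotent representation is PBD-closed (Proposition~\ref{p3}): given a pairwise balanced design on $v$ points with line sizes in $K$, one glues the per-line representations along the diagonal to get one on $v$ points. Wilson's theorem then says a PBD-closed set contains all sufficiently large $v$ with $\alpha(K)\mid v-1$ and $\beta(K)\mid v(v-1)$, and a short arithmetic argument forces $\alpha(K)=1$, $\beta(K)=2$. Without idempotence (to make the gluing well defined on the diagonal) and without the PBD/Wilson step, your argument establishes representability over an infinite but density-zero set of alphabet sizes, not over all sufficiently large ones.
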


The theorem follows from the two propositions below. 

Consider set systems with $r=2$, that is, graphs. We say that a representation
$(f_e:e\in E)$ of the graph $(E,\mathcal{B})$ is \emph{idempotent} if
$f_e(x,x)=x$ for all $x\in A$, where $A$ is the alphabet.

\begin{proposition} \label{prop:idempotent}
Let $(E,\mathcal{B}_1)$ and $(E,\mathcal{B}_2)$ be graphs,
which have idempotent representations over alphabets of cardinalities $q_1$ and
$q_2$ respectively. Then $(E,\mathcal{B}_1\cap\mathcal{B}_2)$ has an idempotent
representation over an alphabet of size $q_1q_2$.
\label{p2}
\end{proposition}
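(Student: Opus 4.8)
The plan is to mimic the Cartesian product construction of Proposition~\ref{prop:cartesian}, but to verify in addition that the combined representation preserves idempotence. Let $(f_e:e\in E)$ and $(g_e:e\in E)$ be idempotent representations of $(E,\mathcal{B}_1)$ and $(E,\mathcal{B}_2)$ over alphabets $A_1$ and $A_2$, and define $h_e:(A_1\times A_2)^2\to A_1\times A_2$ by
\[
    h_e\bigl((a_1,b_1),(a_2,b_2)\bigr)=\bigl(f_e(a_1,a_2),g_e(b_1,b_2)\bigr),
\]
exactly as in the proof of Proposition~\ref{prop:cartesian} specialised to $r=2$. By that proposition, $(h_e:e\in E)$ is a representation of $(E,\mathcal{B}_1\cap\mathcal{B}_2)$ over an alphabet of size $q_1q_2$; so the only thing left is idempotence.

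The idempotence check is the short, essentially routine, core of the argument: for any $(a,b)\in A_1\times A_2$ we compute
\[
    h_e\bigl((a,b),(a,b)\bigr)=\bigl(f_e(a,a),g_e(b,b)\bigr)=(a,b),
\]
using that $f_e$ and $g_e$ are each idempotent. So $(h_e)$ is an idempotent representation, as required.

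Since the representation-theoretic part is quoted wholesale from Proposition~\ref{prop:cartesian}, there is no real obstacle here; the only point needing care is simply to observe that the Cartesian product of two idempotent coordinate-wise operations is again idempotent, which is immediate. (One should also note in passing that the construction makes sense for graphs because $r=2$ is exactly the case in which idempotence is defined, so no index bookkeeping beyond that of Proposition~\ref{prop:cartesian} is involved.)
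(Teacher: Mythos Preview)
Your proof is correct and is exactly the approach the paper intends: it explicitly says the proof is the same as that of Proposition~\ref{prop:cartesian} and omits it. The only extra ingredient is the one-line idempotence check you supply, which is indeed immediate.
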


The proof is the same as the one for Proposition \ref{prop:cartesian} and hence omitted.

For the second proposition, we need to recall the terminology of Richard
Wilson~\cite{Wil74}. A \emph{pairwise balanced design} consists of a set $X$ and a
family $\mathcal{L}$ of subsets of $X$ with the property that any two
distinct elements of $X$ are contained in a unique member of $\mathcal{L}$.
It is a $\pbd(K)$, where $K$ is a set of positive integers, if the cardinality
of every member of $\mathcal{L}$ is contained in $K$. The elements of $X$ and
$\mathcal{L}$ are called \emph{points} and \emph{lines} respectively.

A set $K$ of positive integers is \emph{PBD-closed} if, whenever there exists a
$\pbd(K)$ with $v$ points, then $v\in K$. Given any set $K$ of positive
integers, define
\begin{eqnarray*}
    \alpha(K) &=& \gcd\{k-1:k\in K\}, \\
    \beta(K) &=& \gcd\{k(k-1):k\in K\}.
\end{eqnarray*}
Wilson's main theorem asserts that a PBD-closed set $K$ contains all but
finitely many integers $v$ such that $\alpha(K)\mid v-1$ and
$\beta(K)\mid v(v-1)$.

\begin{proposition} \label{prop:PBD-closed}
Let $(E,\mathcal{B})$ be a graph. Then the set of cardinalities of alphabets
over which $(E,\mathcal{B})$ has an idempotent combinatorial representation is
PBD-closed.
\label{p3}
\end{proposition}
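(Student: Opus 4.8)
The plan is to show that the set $K$ of alphabet sizes over which $(E,\mathcal{B})$ has an idempotent representation is PBD-closed; that is, if there exists a $\pbd(K)$ on $v$ points, then $v \in K$, i.e. $(E,\mathcal{B})$ has an idempotent representation over an alphabet of size $v$. So suppose $(X,\mathcal{L})$ is a $\pbd(K)$ with $|X| = v$, and for each line $\ell \in \mathcal{L}$, say with $|\ell| = k_\ell \in K$, fix an idempotent representation $(f^{(\ell)}_e : e \in E)$ of $(E,\mathcal{B})$ over the alphabet $\ell$ itself (we may take the alphabet to literally be the point set of the line, since only its cardinality matters). The idea is to glue these together: define, for each $e \in E$, a function $f_e : X^2 \to X$ by the rule that, given $(x,y) \in X^2$ with $x \neq y$, we let $\ell$ be the unique line containing both $x$ and $y$ and set $f_e(x,y) = f^{(\ell)}_e(x,y)$; and we set $f_e(x,x) = x$. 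This is well-defined precisely because $(X,\mathcal{L})$ is a pairwise balanced design, and it is idempotent by construction.

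Next I would verify that $(f_e : e \in E)$ is a combinatorial representation of $(E,\mathcal{B})$ over $X$. Fix two distinct indices $i,j \in E$ and consider the map $F = (f_i,f_j) : X^2 \to X^2$. The key observation is that $F$ respects the line structure: if $x \neq y$ lie on the line $\ell$, then $f_i(x,y), f_j(x,y) \in \ell$, and moreover $f_i(x,y) = f_j(x,y)$ would force (by idempotency and the fact that $f^{(\ell)}_{\{i,j\}}$ is injective when $\{i,j\} \in \mathcal{B}$, or by a direct diagonal argument otherwise) a relationship we control; in any case $F$ maps $\ell \times \ell$ into $\ell \times \ell$, and it maps the diagonal of $X^2$ to itself (by idempotency). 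So $X^2$ decomposes, under $F$, into the diagonal together with the sets $(\ell \times \ell) \setminus \Delta_\ell$ for $\ell \in \mathcal{L}$, where $\Delta_\ell$ is the diagonal of $\ell \times \ell$; these off-diagonal pieces are pairwise disjoint because two distinct lines meet in at most one point. On the diagonal $F$ restricts to the identity (hence a bijection); on each piece $(\ell \times \ell)\setminus \Delta_\ell$, $F$ restricts to the map induced by $(f^{(\ell)}_i, f^{(\ell)}_j)$, which by idempotency preserves $\Delta_\ell$ and so restricts to $(\ell\times\ell)\setminus\Delta_\ell$; that restriction is a bijection if and only if $(f^{(\ell)}_i,f^{(\ell)}_j)$ is a bijection of $\ell \times \ell$, which happens if and only if $\{i,j\} \in \mathcal{B}$, since $(f^{(\ell)}_e)$ is a representation. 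Therefore $F$ is a bijection of $X^2$ if and only if $\{i,j\} \in \mathcal{B}$, as required.

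The main obstacle I anticipate is the claim that $F$ really does leave each $\ell \times \ell$ invariant and, in particular, sends off-diagonal points of $\ell \times \ell$ to off-diagonal points of $\ell \times \ell$. That $f_i(x,y), f_j(x,y) \in \ell$ is immediate from the definition; the subtle point is whether $(f_i(x,y), f_j(x,y))$ can land on the diagonal $\Delta_\ell$ when $(x,y)$ is off it — this is exactly what could destroy disjointness of the pieces or injectivity of $F$. When $\{i,j\} \in \mathcal{B}$ this cannot happen because $(f^{(\ell)}_i,f^{(\ell)}_j)$ is a bijection that already fixes $\Delta_\ell$ pointwise (idempotency), so it permutes the complement; when $\{i,j\} \notin \mathcal{B}$ we do not need invariance of the complement — we only need that $F$ fails to be a bijection somewhere, and the failure of $(f^{(\ell)}_i,f^{(\ell)}_j)$ to be bijective on $\ell\times\ell$, combined with its fixing $\Delta_\ell$, already exhibits a collision among off-diagonal points of a single line, hence a collision for $F$. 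So the argument splits cleanly along whether $\{i,j\}\in\mathcal{B}$, and in both cases the pairwise-balanced property (unique line through two points, at most one point on two lines) is what makes the global count work. Once this invariance/disjointness bookkeeping is done carefully, the rest is routine.
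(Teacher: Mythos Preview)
Your construction is identical to the paper's, and your verification is correct; the only difference is organizational. The paper argues by contrapositive --- assuming $(f_{e_1},f_{e_2})$ fails to be injective and doing a three-way case split on whether the colliding pairs lie on the diagonal --- whereas you argue directly via the block decomposition $X^2 = \Delta \sqcup \bigsqcup_{\ell}\bigl((\ell\times\ell)\setminus\Delta_\ell\bigr)$ and check that $F$ permutes each block when $\{i,j\}\in\mathcal{B}$. Both hinge on exactly the same two facts (idempotency fixes the diagonal pointwise; the unique-line property makes the off-diagonal blocks disjoint), so this is essentially the same proof. One small cleanup: in the $\{i,j\}\notin\mathcal{B}$ direction you don't need to argue that the collision is ``among off-diagonal points'' --- it suffices to note that $F$ restricted to $\ell\times\ell$ literally equals $(f_i^{(\ell)},f_j^{(\ell)})$ (on the diagonal this is idempotency, off it by definition), so non-injectivity there gives non-injectivity of $F$ immediately.
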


\begin{proof}
Let $K$ be the set of alphabet sizes for which the graph $G=(E,\mathcal{B})$
has an idempotent representation. To show that $K$ is PBD-closed, let
$(X,\mathcal{L})$ be a $\pbd(K)$ on $v$ points; we have to show that $v\in  K$.

By assumption, for each line $L$ of the PBD, we have an idempotent
representation $(f_e^L)$ with the alphabet $L$. We construct a representation
$(f_e)$ with alphabet $X$ by the following rule:
\begin{itemize}
\item $f_e(x,x)=x$;
\item if $x\ne y$, and $L$ is the unique line containing $x$ and $y$, then
$f_e(x,y)=f_e^L(x,y)$.
\end{itemize}
We claim that this is a representation.

Take $e_1\ne e_2$. Suppose first that $(f_{e_1},f_{e_2})$ is not a bijection.
Then there exist distinct pairs $(x,y)$ and $(x',y')$ such that
$f_{e_i}(x,y)=f_{e_i}(x',y')$ for $i=1,2$. We consider three cases.
\begin{itemize}
\item If $x=y$ and $x'=y'$, then $x=f_{e_i}(x,y)=f_{e_i}(x',y')=x'$.
\item Suppose that $x=y$ and $x'\ne y'$. Then $f_{e_i}(x',y')=f_{e_i}(x,y)=x$,
so $x$ lies in the line $L$ containing $x'$ and $y'$. Then
$(f_{e_1}^L,f_{e_2}^L)$ is not a bijection, so $\{e_1,e_2\}\notin\mathcal{B}$.
\item Suppose that $x\ne y$ and $x'\ne y'$. If $f_{e_1}(x,y)\ne f_{e_2}(x,y)$,
then both of these points lie in the line $L$ containing $x$ and $y$; hence
$x'$ and $y'$ also lie
in this line. Now $(f_{e_1}^L,f_{e_2}^L)$ fails to be a bijection, and so
$\{e_1,e_2\}\notin\mathcal{B}$. So we can suppose that
$f_{e_1}(x,y)=f_{e_2}(x,y)=z$, say, with $x,y,z\in L$. Then
\[(f_{e_1}(x,y),f_{e_2}(x,y))=(f_{e_1}(z,z),f_{e_2}(z,z)),\]
and again $(f_{e_1}^L,f_{e_2}^L)$ fails to be a bijection.
\end{itemize}

Conversely, suppose that $\{e_1,e_2\}\notin\mathcal{B}$. Then, for any
line $L$,  $(f_{e_1}^L,f_{e_2}^L)$ is not a bijection; so $(f_{e_1},f_{e_2})$
is not a bijection.

Our claim is proved, and with it, the Proposition.
\end{proof}

\paragraph{Proof of the Theorem} First, we observe that the complete graph
$K_n$ has an idempotent representation over any field with at least $n$
elements: simply associate a field element $\lambda(e)$ with each $e\in E$,
and put $f_e(x,y)=\lambda(e)x+(1-\lambda(e))y$.

Now we obtain an idempotent representation of the complete graph minus an edge:
if  $e_1$ and $e_2$ are the two nonadjacent vertices, take the above
representation of the graph on $E\setminus\{e_1\}$, and let $f_{e_2}=f_{e_1}$.

Next, an arbitrary graph is the intersection of graphs each of which is a
complete graph minus an edge, and so has an idempotent representation, by
Proposition~\ref{p2}. If the alphabet size of this representation is $N$, we
obtain further representations over alphabets of size $qN$, for any $q$ whose
prime factors are each at least the number of vertices (by Proposition~\ref{p2}
again, intersecting with a complete graph).

Now to prove the Theorem, we know from Proposition~\ref{p3} that the set $K$ of
alphabet sizes over which idempotent representations exist is PBD-closed; so by
Wilson's theorem we have only to show that $\alpha(K)=1$ and $\beta(K)=2$.

Suppose that $\alpha(K)>1$. Then every number of the form $qN$ as above is
congruent to $1$ mod~$\alpha(K)$, contradicting the fact that we can choose
an arbitrarily large prime $p\equiv-1$~(mod~$\alpha(K))$. So $\alpha(K)=1$.
The argument for $\beta(K)$ is similar. $\Box$

\section{Families representable over a given finite alphabet} \label{sec:given_alphabet}

\subsection{Characterisation}

We now characterize families which are representable over a given alphabet. Clearly, if $(E,\mathcal{B})$ is representable over a finite alphabet $A$, then it is representable over any other alphabet with the same cardinality, so we assume $A = \mathbb{Z}_q$ unless otherwise specified. Furthermore, if $(E,\mathcal{B})$ is representable over $A$, then any induced subgraph of $(E,\mathcal{B})$ is as well.

First of all, the definitions below easily generalize concepts for matroids. For any $e \in E$, denote the set of bases containing $e$ (taking $e$ away) as $\mathcal{B}(e) = \{b \subseteq E \backslash \{e\}: |b| = r-1, b \cup \{e\} \in \mathcal{B}\}$.

\begin{definition} \label{def:loop}
\begin{itemize}
    \item $l$ is a \emph{loop} if no basis contains $l$, that is, $\mathcal{B}(l)=\emptyset$.

    \item $l_1$ and $l_2$ are \emph{parallel} if each can be replaced by the other in a basis, that is, $\mathcal{B}(l_1)=\mathcal{B}(l_2)$.

    \item The subset $I$ of $E$ is \emph{dependent} if no basis contains $I$.
\end{itemize}
\end{definition}

These definitions are absolute, that is, independent of representation. However,
given a representation $f=(f_i:i\in E)$, we can say:
\begin{itemize}
    \item $l$ is an \emph{$f$-loop} if the partition corresponding to $l$ does not have all its parts of the same size.

    \item $l_1$ and $l_2$ are \emph{$f$-parallel} if the corresponding partitions are equal. (A statistician would say that these factors are \emph{aliased}.)

    \item A subset $I$ of $E$, with $|I|=s$, is \emph{$f$-dependent} if the meet of the partitions indexed by $I$ in the partition lattice does not have $q^s$ parts of size $q^{r-s}$.
\end{itemize}
The representation-specific notions just defined imply the absolute notions
given earlier. In the case of a linear representation of a matroid, the
concepts are equivalent.

We say that a set $I$ is \emph{$f$-independent} if it is not $f$-dependent.

Remark: if $\{l_1,l_2\}$ is $f$-independent, then the partitions
corresponding to $l_1$ and $l_2$ are orthogonal in the statistical sense.
The converse is false, but in fact $l_1$ and $l_2$ are independent if and
only if the partitions are orthogonal and their join is the partition with
a single part. See~\cite[Chapter 10]{Bai08} for definitions.

It is clear that the parallel relation is an equivalence relation. An obvious choice can be made to represent a family with loops and parallel elements: use constant functions for loops, and use the same function for any set of parallel elements. We clarify this idea below.

\begin{definition} \label{def:balanced}
A function $f:A^r \rightarrow A$ is {\em balanced} if for all $a \in A$, $|f^{-1}(a)| = q^{r-1}$. Two balanced functions $f,g : A^r \rightarrow A$ are {\em parallel} if there exists a permutation $\pi \in S_A$ such that $f = \pi \circ g$.
\end{definition}

It is easily proved that $f$ is balanced if and only if there exists a permutation $\sigma \in S_{A^r}$ such that $f = g \circ \sigma$, where $g$ is any coordinate function.

Again, the parallel relation for functions is an equivalence relation, where each equivalence class contains $q!$ functions. Indeed, any balanced function can be viewed as a partition of $A^r$ into $q^{r-1}$ parts of $q$ elements each (the set of pre-images $f^{-1}(a)$ for all $a$). Two functions are parallel if and only if they induce the same partition, and the equivalence class can be viewed as that partition, which we shall denote as $\bar{f}$.

\begin{proposition} \label{prop:simple}
Let $(E,\mathcal{B})$ be a family of bases of rank $r$ and $f_i$ be a representation of $(E,\mathcal{B})$ over $A$. Then
\begin{enumerate}
    \item \label{item:loop} If $l \in E$ is not a loop, then $f_l$ is balanced. Otherwise, $f_l$ can be chosen to be any imbalanced function.

    \item \label{item:parallel} If $l$ and $m$ are not parallel, then $f_l$ and $f_m$ are not parallel either. Otherwise, $f_l$ and $f_m$ can be chosen to be parallel.
\end{enumerate}
\end{proposition}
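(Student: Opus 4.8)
My plan is to prove the two parts separately, in each case handling first the structural implication ("if the absolute property fails then the function-level property fails") and then the explicit-construction direction ("if the absolute property holds, the functions can be chosen with the corresponding property"). Throughout I will use Remark~2, which lets me think of each $f_i$ as the partition $\pi_i$ of $A^r$ it induces, and the observation that $f_b$ is a bijection exactly when the meet $\bigwedge_{i\in b}\pi_i$ is the partition into singletons (equivalently, the $r$ partitions indexed by $b$ form a Cartesian decomposition of $A^r$).

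For part~\ref{item:loop}: suppose $l$ is not a loop, so there is a basis $b\ni l$; write $b=\{l=i_1,i_2,\ldots,i_r\}$. Since $f_b$ is a bijection, the map $x\mapsto(f_{i_1}(x),\ldots,f_{i_r}(x))$ is a bijection $A^r\to A^r$, and composing with the projection onto the first coordinate shows that $f_l$ takes each value in $A$ exactly $q^{r-1}$ times; hence $f_l$ is balanced. Conversely, if $l$ is a loop, then no $f_b$ with $l\in b$ is required to be a bijection, and since any $f_b$ with $l\in b$ factors through the value distribution of $f_l$ in the first coordinate, such an $f_b$ automatically fails to be a bijection as soon as $f_l$ is imbalanced; so we may freely replace $f_l$ by any imbalanced function without destroying any of the bijectivity requirements (those $b$ with $l\notin b$ are untouched, and those with $l\in b$ are not in $\mathcal{B}$). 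This direction needs the small observation that changing one coordinate function cannot turn a non-bijection into a bijection when that coordinate is already not surjective onto $A$ with the right fibre sizes.

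For part~\ref{item:parallel}: suppose $l$ and $m$ are not parallel, i.e. $\mathcal{B}(l)\neq\mathcal{B}(m)$, say $b\in\mathcal{B}(l)\setminus\mathcal{B}(m)$, so $b\cup\{l\}\in\mathcal{B}$ but $b\cup\{m\}\notin\mathcal{B}$. If $f_l$ and $f_m$ were parallel, $f_m=\rho\circ f_l$ for a permutation $\rho\in S_A$, then $f_{b\cup\{m\}}$ would be obtained from the bijection $f_{b\cup\{l\}}$ by post-composing the relevant coordinate with $\rho$, hence would itself be a bijection — contradicting $b\cup\{m\}\notin\mathcal{B}$. (Here $l\notin b$ and $m\notin b$ hold because $b\in\mathcal{B}(l)\cap 2^{E\setminus\{l\}}$ and the non-parallel case only concerns $l\neq m$; the edge case $m\in b$ would force $b\cup\{m\}$ to have size $<r+1$ and can be dismissed, or folded into the loop discussion.) For the converse, if $l$ and $m$ are parallel, replace $f_m$ by $f_l$: for any $b$ with $m\in b$ but $l\notin b$, bijectivity of $f_b$ with the substituted function is equivalent to bijectivity of $f_{(b\setminus\{m\})\cup\{l\}}$, and parallelism $\mathcal{B}(l)=\mathcal{B}(m)$ guarantees $(b\setminus\{m\})\cup\{l\}\in\mathcal{B}\iff b\in\mathcal{B}$; for $b$ containing both $l$ and $m$, the map has two equal coordinates and is never a bijection, but such $b$ can never lie in $\mathcal{B}$ since $l,m$ parallel and in a common basis would be a contradiction ($\mathcal{B}(l)$ consists of sets not containing $l$, so if $\{l,m\}\subseteq b'\in\mathcal{B}$ then $b'\setminus\{l\}\in\mathcal{B}(l)=\mathcal{B}(m)$ contains $m$, impossible).

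The main obstacle I anticipate is the bookkeeping in the converse (construction) directions: verifying that the single-function substitution does not accidentally create a spurious bijection for some $b\notin\mathcal{B}$ or destroy a required one for $b\in\mathcal{B}$, especially in the degenerate cases where $b$ contains both of the elements being identified. Everything reduces to the elementary fact that post-composing one coordinate of a map $A^r\to A^r$ with a fixed permutation of $A$ preserves bijectivity, and that a map with two literally equal coordinate functions is never bijective for $r\geq 2$; once those are isolated, the remaining arguments are routine casework on whether $l$ or $m$ lies in the index set $b$ under consideration.
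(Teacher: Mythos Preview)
Your proof is correct and follows the same approach as the paper for the structural implications (non-loop $\Rightarrow$ balanced; parallel functions $\Rightarrow$ parallel elements, argued contrapositively). The paper's proof in fact only writes out these two halves and leaves the ``can be chosen'' construction directions unargued; your explicit verification of those directions, including the casework on whether $l$ or $m$ lies in the index set $b$, fills in detail that the paper simply omits.
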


\begin{proof}
\ref{item:loop}) If $l$ is not a loop, then there exists $b \in \mathcal{B}$ which contains $l$, say $b = \{l_1 = l,l_2,\ldots,l_r\}$. Therefore, $|f_b^{-1}(x)| = 1$ for all $x \in A^r$ which implies $|f_l^{-1}(x_1)| = q^{r-1}$ for all $x_1 \in A$ and hence $f_l$ is balanced.\\
\\
\ref{item:parallel}) If $f_l$ and $f_m$ are parallel and $f_l$ can be extended to a bijection by $f_2,\ldots,f_r$, then we easily obtain that $f_m$ can also be extended by the same functions. Therefore, $l$ and $m$ are parallel.
\end{proof}

We refer to a family of bases as {\em simple} if it contains neither loops nor parallel elements. Any family of bases $(E,\mathcal{B})$ can be turned into a simple one $(E^*,\mathcal{B}^*)$ by removing loops and considering one element per parallel class. Proposition \ref{prop:simple} then indicates that $(E,\mathcal{B})$ is representable over an alphabet $A$ if and only if $(E^*,\mathcal{B}^*)$ is representable over the same alphabet.

By Proposition \ref{prop:simple} above, all functions in the representation of a simple family are balanced and non-parallel to one another. As a corollary, they are all distinct, which shows that there are only finitely many representable simple families of a given rank and over a given alphabet. We can now characterise these families.

\begin{definition} \label{def:Mqr}
Let $P(q,r)$ be the set of partitions of $A^r$ into $q$ equal parts and denote its elements as $\bar{f}_1,\ldots,\bar{f}_k$. Let $\mathcal{M}(q,r) = (P(q,r), \mathcal{B})$, where $\mathcal{B} = \{\{\bar{f}_{i_1},\ldots,\bar{f}_{i_r}\} : \bar{f}_{\{i_1,\ldots,i_r\}} \mbox{has}\, q^r \, \mbox{parts}\}$.
\end{definition}

Clearly, $\mathcal{M}(q,r)$ is simple and representable over $A$: to any partition $\bar{f}_i$ associate the corresponding function $f_i$. Therefore, all its induced subgraphs are also representable over $A$ (but may not be simple).  Moreover, all representable families `belong to' $\mathcal{M}(q,r)$.

\begin{theorem} \label{th:subgraph}
The family $(E,\mathcal{B})$ of rank $r$ is representable over $A$ if and only if $(E^*,\mathcal{B}^*)$ is isomorphic to an induced subgraph of $\mathcal{M}(q,r)$.
\end{theorem}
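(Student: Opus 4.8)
The plan is to show the two directions are essentially bookkeeping once we recall the reductions already established. For the ``if'' direction, suppose $(E^*,\mathcal{B}^*)$ is isomorphic to an induced subgraph of $\mathcal{M}(q,r)$. Since $\mathcal{M}(q,r)$ is representable over $A$ (associate to each partition $\bar f_i$ a function $f_i$ realising it, which is legitimate because the defining condition on $\mathcal{B}$ in Definition~\ref{def:Mqr} is exactly the condition that $f_{\{i_1,\dots,i_r\}}$ be a bijection, and this is a property of the partitions alone by Remark~2), every induced subgraph of it is representable over $A$ --- this was already noted after Definition~\ref{def:Mqr}, and follows because deleting elements of $E$ simply discards the corresponding functions while leaving the bijectivity conditions among the survivors unchanged. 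Hence $(E^*,\mathcal{B}^*)$ is representable over $A$, and then Proposition~\ref{prop:simple} (or rather the paragraph following it) gives that $(E,\mathcal{B})$ is representable over $A$, since passing to the simple reduction does not affect representability over a fixed alphabet.

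For the ``only if'' direction, suppose $(E,\mathcal{B})$ is representable over $A$. By the remark following Proposition~\ref{prop:simple}, $(E^*,\mathcal{B}^*)$ is then representable over $A$; let $(f_i : i \in E^*)$ be such a representation. By Proposition~\ref{prop:simple}, since $(E^*,\mathcal{B}^*)$ is simple, each $f_i$ is balanced and no two are parallel, so each $f_i$ determines a partition $\bar f_i \in P(q,r)$ and the map $i \mapsto \bar f_i$ is injective. This exhibits $E^*$ as a subset of $P(q,r)$. It remains to check that under this identification the edge set $\mathcal{B}^*$ is exactly the induced edge set from $\mathcal{M}(q,r)$: a set $\{i_1,\dots,i_r\} \subseteq E^*$ lies in $\mathcal{B}^*$ if and only if $f_{\{i_1,\dots,i_r\}}$ is a bijection $A^r \to A^r$, which by Remark~2 happens if and only if the meet of $\bar f_{i_1},\dots,\bar f_{i_r}$ is the partition into singletons, i.e.\ $\bar f_{\{i_1,\dots,i_r\}}$ has $q^r$ parts, which is precisely the membership condition for $\mathcal{B}$ in $\mathcal{M}(q,r)$. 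Hence $i \mapsto \bar f_i$ is an isomorphism from $(E^*,\mathcal{B}^*)$ onto the subgraph of $\mathcal{M}(q,r)$ induced on $\{\bar f_i : i \in E^*\}$.

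I do not expect a serious obstacle here: the theorem is essentially a restatement of the constructions already assembled (the simple reduction of Proposition~\ref{prop:simple}, the partition viewpoint of Remark~2, and the definition of $\mathcal{M}(q,r)$). The one point needing a little care is making precise the claim, used in both directions, that ``representable over $A$'' is inherited by induced subgraphs and is invariant under the simple reduction; both were asserted in the text but should be invoked explicitly. The mildest subtlety is that in the ``if'' direction one recovers a representation of $(E^*,\mathcal{B}^*)$, not of $(E,\mathcal{B})$ directly, so one must cite the ``if and only if'' in the paragraph after Proposition~\ref{prop:simple} to pass back to $(E,\mathcal{B})$; this is where the constant-function-for-loops and equal-function-for-parallel-elements constructions do their work.
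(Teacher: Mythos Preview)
Your proposal is correct and follows essentially the same approach as the paper's proof, which is a terse two-sentence version of exactly the reductions you spell out: pass from $(E,\mathcal{B})$ to $(E^*,\mathcal{B}^*)$ via Proposition~\ref{prop:simple}, then identify representability of the simple family with the basis condition in $\mathcal{M}(q,r)$ via the partition viewpoint. Your version is more explicit in verifying injectivity of $i\mapsto\bar f_i$ and in citing Remark~2 for the partition reformulation, but there is no difference in strategy.
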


\begin{proof}
First, as mentioned above, $(E,\mathcal{B})$ is representable over $A$ if and only if $(E^*, \mathcal{B}^*)$ is representable over $A$. The latter is equivalent to the existence of functions $(f_e)$ such that $\{i_1,\ldots,i_r\} \in \mathcal{B}^*$ if and only if $f_{\{i_1,\ldots,i_r\}}$ is a bijection, which holds if and only if $\{\bar{f}_{i_1},\ldots,\bar{f}_{i_r}\}$ is a basis of $\mathcal{M}(q,r)$.
\end{proof}

\subsection{Properties of $\mathcal{M}(q,r)$}

Proposition \ref{prop:M(q,r)} below enumerates some properties of $\mathcal{M}(q,r)$.

\begin{proposition} \label{prop:M(q,r)}
The hypergraph $\mathcal{M}(q,r)$ satisfies the following properties.
\begin{enumerate}
    \item The number of vertices is given by
    $$
        |P(q,r)| = \frac{1}{q!} {q^r \choose q^{r-1},\ldots,q^{r-1}} = \frac{q^r!}{q!(q^{r-1}!)^q}.
    $$

    \item $\mathcal{M}(q,r)$ is regular in the following sense. For any set of $1 \leq k \leq r$ partitions $\{\bar{f}_1,\ldots,\bar{f}_k\}$ which are in a basis of $\mathcal{M}(q,r)$, there are $N(q,r;k,l)$ sets of $l$ partitions $\{\bar{f}_{k+1},\ldots,\bar{f}_{k+l}\}$ such that $\bar{f}_1,\ldots,\bar{f}_{k+l}$ belong to a basis, where
    $$
        N(q,r;k,l) = \frac{1}{l!(q!)^l} {q^{r-k} \choose q^{r-k-l},\ldots,q^{r-k-l}}^{q^k} = \frac{(q^{r-k}!)^{q^k}}{q!^l(q^{r-k-l}!)^{q^{k+l}}}.
    $$

    \item  In particular, $\mathcal{M}(q,r)$ is regular of valency
    $$
        N(q,r;1,r-1) = \frac{(q^{r-1}!)^q}{(r-1)!(q!)^{r-1}}.
    $$
    Therefore, the number of bases in $\mathcal{M}(q,r)$ is given by
    $$
        |\mathcal{B}| = \frac{q^r!}{r! (q!)^r}.
    $$

    \item $\mathcal{M}(q,r)$ contains $\mathcal{M}(q,r-1)$ in the following sense. Let $g_0(x,x_r) = x_r$ for all $x = (x_1,\ldots,x_{r-1})$; for any partition $\bar{f} \in P(q,r-1)$, let $\bar{g} \in P(q,r)$ be defined as $g(x,x_r) = f(x)$. Then $\{\bar{f}_1,\ldots,\bar{f}_{r-1}\} \in \mathcal{M}(q,r-1)$ if and only if $\{\bar{g}_0, \bar{g}_1,\ldots,\bar{g}_{r-1}\} \in \mathcal{M}(q,r)$.
\end{enumerate}
\end{proposition}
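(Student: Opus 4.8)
The plan is to prove the four parts essentially by direct counting, exploiting the self-reproducing structure of Cartesian powers. For part~1, I would count ordered tuples $(B_0,\ldots,B_{q-1})$ of disjoint $q^{r-1}$-subsets of $A^r$ that cover $A^r$; there are $\binom{q^r}{q^{r-1},\ldots,q^{r-1}}$ such ordered partitions, and each unordered partition into equal parts is counted $q!$ times, giving $|P(q,r)|$. The multinomial rewrites as $q^r!/(q!\,(q^{r-1}!)^q)$ by the definition of the multinomial coefficient. This part is a warm-up and should be routine.

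For part~2, the key observation is Remark~3 combined with Remark~1: if $\{\bar f_1,\ldots,\bar f_k\}$ lies in a basis, then after a change of variables we may assume $f_j$ is the $j$th coordinate function for $j\le k$, so the map $x\mapsto(f_1(x),\ldots,f_k(x))$ identifies $A^r$ with $A^k\times A^{r-k}$, where each fibre over a point of $A^k$ is a copy of $A^{r-k}$. Extending $\{\bar f_1,\ldots,\bar f_k\}$ to $\{\bar f_1,\ldots,\bar f_{k+l}\}$ in a basis amounts to choosing $l$ further balanced functions on $A^{r-k}$ (one simultaneously on each of the $q^k$ fibres, but the balance/independence condition forces them to agree fibrewise with a single partition structure on $A^{r-k}$) so that together with the coordinate functions they form a bijection $A^{r-k}\to A^{r-k}$. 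I would argue that this is exactly the data of an ordered partition of $A^{r-k}$ into $q^{r-k-l}$-blocks refined appropriately on each fibre — concretely, a surjection $A^{r-k}\to A^l$ whose fibres each have size $q^{r-k-l}$, chosen on each of the $q^k$ fibres — which gives the count $\left(\binom{q^{r-k}}{q^{r-k-l},\ldots,q^{r-k-l}}\right)^{q^k}$ for the ordered version, divided by $l!(q!)^l$ to pass to the unordered set of $l$ partitions. Part~3 is the special case $k=1$, $l=r-1$ of part~2 for the valency; and then $|\mathcal{B}| = |P(q,r)|\cdot N(q,r;1,r-1)/\binom{r}{1}$ — more cleanly, I would recount $|\mathcal{B}|$ directly as the number of ordered bases $q^r!/(q!)^r$ (ordered partitions of $A^r$ into a grid, i.e. bijections $A^r\to A^r$ modulo relabelling each coordinate) divided by $r!$ for the ordering of the $r$ partitions, and check consistency with the valency formula via the handshake identity $r|\mathcal{B}| = |P(q,r)|\cdot N(q,r;1,r-1)$.

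For part~4, I would simply verify that the map $\bar f\mapsto\bar g$ with $g(x,x_r)=f(x_1,\ldots,x_{r-1})$ is well-defined on partitions and that $\bar g_0$ (the last coordinate) together with $\bar g_1,\ldots,\bar g_{r-1}$ forms a bijection $A^r\to A^r$ precisely when $(f_1,\ldots,f_{r-1})$ is a bijection $A^{r-1}\to A^{r-1}$: indeed the combined map sends $(x,x_r)$ to $\big(f_1(x),\ldots,f_{r-1}(x),x_r\big)$, which is bijective iff its first $r-1$ components give a bijection of $A^{r-1}$, with $x_r$ carried along trivially. So the equivalence is immediate from the definitions.

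The main obstacle will be part~2: making precise the claim that extending a partial basis corresponds fibrewise to independent choices on $A^{r-k}$, and in particular getting the symmetry factor $l!(q!)^l$ right rather than, say, $(l!)$ alone or $(q!)^{q^k}$. The subtlety is that the $l$ new partitions of $A^{r-k}$ must be chosen so that they are \emph{jointly} independent (their common refinement has $q^l$ blocks), not merely balanced; I would handle this by again invoking Remark~1 to make the first of them a coordinate function and induct on $l$, or equivalently by noting that a jointly-independent $l$-tuple of balanced partitions of $A^{r-k}$ is the same as an ordered partition of $A^{r-k}$ into $q^{l}$ blocks of size $q^{r-k-l}$ together with an identification of the block-index set with $A^l$, from which the multinomial and the $(q!)^l l!$ overcount both drop out cleanly. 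Once that correspondence is stated carefully the arithmetic simplifications to the closed forms are mechanical.
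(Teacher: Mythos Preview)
Your approach is essentially the paper's: count balanced functions and divide by the $q!$ relabellings for part~1; for part~2 view the combined new map $g=(f_{k+1},\ldots,f_{k+l})$ fibrewise over the already-balanced map $(f_1,\ldots,f_k)$ and count balanced maps $A^{r-k}\to A^l$ independently on each of the $q^k$ fibres, then divide by $l!(q!)^l$; part~3 by specialisation and double counting; part~4 by direct inspection of the combined map $(x,x_r)\mapsto(f_1(x),\ldots,f_{r-1}(x),x_r)$.

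One expository point to clean up in part~2: the parenthetical claim that ``the balance/independence condition forces them to agree fibrewise with a single partition structure on $A^{r-k}$'' is false as stated --- the restrictions of $f_{k+1},\ldots,f_{k+l}$ to different fibres of $(f_1,\ldots,f_k)$ are completely independent, and that independence is precisely what produces the exponent $q^k$ on the multinomial. Likewise, the phrase ``together with the coordinate functions they form a bijection $A^{r-k}\to A^{r-k}$'' is the correct condition only when $k+l=r$; for general $l$ the requirement is merely that the combined map $A^r\to A^{k+l}$ be balanced (each fibre of size $q^{r-k-l}$), which is exactly what you then say correctly in the ``concretely'' clause. Once those two misstatements are removed your argument coincides with the paper's, and your closing discussion of why the overcount factor is exactly $l!(q!)^l$ is in fact more careful than the paper's one-line justification.
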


\begin{proof}
1. The number of balanced functions of $A^r$ to $A$ is exactly the multinomial coefficient ${q^r \choose q^{r-1},\ldots,q^{r-1}}$. Since any balanced function has exactly $q!$ parallel functions, we obtain the value of $|P(q,r)|$.

2. Let us denote the function generated by $f_1,\ldots,f_k$ as $f$ and the one generated by $f_{k+1},\ldots,f_{k+l}$ as $g$. The set $\{\bar{f}_1,\ldots,\bar{f}_{k+l}\}$ belongs to a basis if and only if the function $h:A^r \to A^{k+l}$ is balanced. In other words, $g$ must be balanced over all pre-images $f^{-1}(a)$ and hence can be viewed as $q^k$ functions $g_a:A^{r-k} \to A^l$. There are exactly ${q^{r-k} \choose q^{r-k-l},\ldots,q^{r-k-l}}$ choices for each $g_a$, and hence ${q^{r-k} \choose q^{r-k-l},\ldots,q^{r-k-l}}^{q^k}$ choices for $g$. Accounting for all parallel functions and all permutations of $\{\bar{f}_{k+1},\ldots,\bar{f}_{k+l}\}$, we must divide by $l!(q!)^l$ to obtain the value of $N(q,r;k,l)$.

3. $N(q,r;1,r-1)$ is a special case of the property above, while $|\mathcal{B}|$ is easily obtained by double counting.

4. This is clear by definition of $\mathcal{M}(q,r)$.
\end{proof}

This gives criteria on representability of families.

\begin{corollary} \label{cor:representable}
Let $(E,\mathcal{B})$ be a simple family of rank $r$. If $|E| > |P(q,r)|$ or if there exists a set $X$ of $k$ elements such that there are more than $N(q,r;k,l)$ sets $Y$ of $l$ elements with $X \cup Y$ are in a basis of $\mathcal{B}$, then $(E,\mathcal{B})$ is not representable over any alphabet of size up to $q$.
\end{corollary}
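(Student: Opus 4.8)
The plan is to derive Corollary~\ref{cor:representable} as an immediate consequence of Theorem~\ref{th:subgraph} together with the counting facts in Proposition~\ref{prop:M(q,r)}. Since $(E,\mathcal{B})$ is already assumed simple, we have $(E^*,\mathcal{B}^*) = (E,\mathcal{B})$, so by Theorem~\ref{th:subgraph} representability over an alphabet of size $q$ is equivalent to $(E,\mathcal{B})$ being isomorphic to an induced subgraph of $\mathcal{M}(q,r)$. The whole point is that an induced subgraph cannot have more vertices than the ambient hypergraph, and—more subtly—cannot have larger ``local degrees'' than the ambient hypergraph, because the relevant sets $Y$ witnessing that $X \cup Y$ lies in a basis are themselves sets of vertices that must embed.

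First I would handle the vertex-count bound: if $(E,\mathcal{B})$ embeds as an induced subhypergraph of $\mathcal{M}(q,r)$, then in particular $|E| \le |P(q,r)|$, so $|E| > |P(q,r)|$ forces non-representability over an alphabet of size exactly $q$. Next, for the degree bound, suppose $(E,\mathcal{B})$ is representable over an alphabet of size $q$ and fix the induced embedding $\phi: E \hookrightarrow P(q,r)$. Take the set $X$ of $k$ elements in the statement; since it lies in a basis of $\mathcal{B}$, its image $\phi(X)$ is a set of $k$ partitions lying in a basis of $\mathcal{M}(q,r)$, so Proposition~\ref{prop:M(q,r)}(2) applies with $|X| = k$. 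Each set $Y$ of $l$ elements of $E$ with $X \cup Y$ in a basis of $\mathcal{B}$ maps under $\phi$ to a distinct set $\phi(Y)$ of $l$ partitions with $\phi(X) \cup \phi(Y)$ in a basis of $\mathcal{M}(q,r)$ (distinctness because $\phi$ is injective, and the ``in a basis'' property is preserved because $\phi$ is an induced embedding); hence the number of such $Y$ is at most $N(q,r;k,l)$. So if there are strictly more than $N(q,r;k,l)$ such $Y$, then $(E,\mathcal{B})$ is not representable over an alphabet of size $q$.

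Finally I would upgrade ``size $q$'' to ``any size up to $q$''. This uses the monotonicity remarked near the start of Section~\ref{sec:given_alphabet}: if $(E,\mathcal{B})$ were representable over some alphabet of size $q' \le q$, then—wait, representability is \emph{not} monotone in alphabet size in general, so I must be more careful here. The correct observation is that both bounds $|P(q,r)|$ and $N(q,r;k,l)$ are nondecreasing in $q$ (they count partitions/functions on $A^r$, and a larger alphabet only gives more), so the hypothesis ``$|E| > |P(q,r)|$'' implies ``$|E| > |P(q',r)|$'' for every $q' \le q$, and likewise for $N$; then applying the size-$q'$ version of the argument above for each such $q'$ gives non-representability over every alphabet of size up to $q$. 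I would state this monotonicity of the bounds explicitly (it follows from the explicit product formulas, or more conceptually by an injection argument) and conclude.

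The main obstacle is precisely this last point: the phrase ``up to $q$'' cannot rest on monotonicity of representability (which fails, as the $K_4$ example shows), so the argument must instead invoke monotonicity of the combinatorial \emph{bounds} $|P(q,r)|$ and $N(q,r;k,l)$ in $q$. Verifying that these quantities are nondecreasing in $q$ is routine from the closed forms but should not be skipped. Everything else is a direct translation through Theorem~\ref{th:subgraph}.
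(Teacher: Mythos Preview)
Your approach is correct and matches the paper's: the corollary is stated there without proof, as an immediate consequence of Theorem~\ref{th:subgraph} and Proposition~\ref{prop:M(q,r)}, which is exactly the derivation you give. Your explicit handling of the phrase ``any alphabet of size up to $q$'' via monotonicity of $|P(q,r)|$ and $N(q,r;k,l)$ in $q$ is more careful than the paper, which leaves this point entirely implicit.
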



We remark that the clique number of $\mathcal{M}(q,r)$ is of particular interest, as a clique of size $n$ corresponds to an $(n,r)$ MDS code over an alphabet of size $q$. Therefore, the MDS conjecture~\cite[Research Problem 11.4]{MS77} can be recast a conjecture of the clique number of $\mathcal{M}(q,r)$.


Proposition \ref{prop:adjacency} below gives necessary and sufficient conditions for adjacency in the graph $\mathcal{M}(q,r)$. For any two functions $f,g : A^r \to A$, we define the Hamming distance between $f$ and $g$ as
$$
    d_H(f,g) := |\{x \in A^r : f(x) \neq g(x)\}|.
$$

\begin{proposition} \label{prop:adjacency}
Let $\bar{f}, \bar{g} \in P(q,r)$, then the following are equivalent.
\begin{enumerate}
    \item \label{it:edge} They are adjacent in $\mathcal{M}(q,r)$, i.e. $\{\bar{f}, \bar{g}\} \subseteq b$ for some $b \in \mathcal{M}(q,r)$.

    \item \label{it:balanced_edge} The function $g$ restricted to the set $f^{-1}(a)$ is balanced for all $a \in A$.

    \item \label{it:parallel_edge} For all $f'$ parallel to $f$, $d_H(f',g) = (q-1)q^{r-1}$.

    \item \label{it:subset_permutations_edge} There exists $T \subseteq S_q$ such that $|T| = (q-1)^2$, there exist $a,b$ with $a \pi \neq b$ for all $\pi \in T$, and all permutation matrices $\{M_\pi \in \mathbb{R}^{q \times q}: \pi \in T\}$ are linearly independent for which $d(\pi \circ f, g) = (q-1)q^{r-1}$ for all $\pi \in T$.
\end{enumerate}
\end{proposition}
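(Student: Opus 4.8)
The plan is to encode the pair $(f,g)$ into the $q\times q$ nonnegative integer matrix $N=(n_{ab})$ with $n_{ab}=|f^{-1}(a)\cap g^{-1}(b)|$. Since $f$ and $g$ are balanced, every row sum and every column sum of $N$ equals $q^{r-1}$, so $N$ lies in the linear span of the $q\times q$ permutation matrices; it is well known that this span is the $((q-1)^2+1)$-dimensional space $U=\mathbb{R}J\oplus V_0$, where $J$ is the all-ones matrix, $V_0$ is the $(q-1)^2$-dimensional space of matrices all of whose row sums and column sums are $0$, and $V_0\perp J$ for the trace inner product $\langle A,B\rangle=\sum_{i,j}A_{ij}B_{ij}$. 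Write $N=q^{r-2}J+M$ with $M\in V_0$. Then condition \ref{it:balanced_edge} (that $g$ restricted to $f^{-1}(a)$ is balanced for every $a$) says exactly that $n_{ab}=q^{r-2}$ for all $a,b$, that is, $M=0$; and this is the same as saying that $h=(f,g):A^r\to A^2$ is balanced, which, as in the proof of part~2 of Proposition~\ref{prop:M(q,r)}, is equivalent to $\{\bar{f},\bar{g}\}$ being contained in a basis of $\mathcal{M}(q,r)$. Thus \ref{it:edge} and \ref{it:balanced_edge} are equivalent, both amounting to $M=0$.

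Next I would pass to the Hamming-distance formulations. For any $\pi\in S_q$,
\[
    d_H(\pi\circ f,g)=q^r-|\{x\in A^r:\pi(f(x))=g(x)\}|=q^r-\sum_{c\in A}n_{c,\pi(c)}=q^r-\langle N,M_\pi\rangle,
\]
where $M_\pi$ is the permutation matrix of $\pi$ (rows indexed by the source). Since $\langle J,M_\pi\rangle=q$, this becomes $d_H(\pi\circ f,g)=(q-1)q^{r-1}-\langle M,M_\pi\rangle$. As $f'$ ranges over the functions parallel to $f$, $\pi$ ranges over all of $S_q$, so condition \ref{it:parallel_edge} asserts that $\langle M,M_\pi\rangle=0$ for every $\pi\in S_q$, i.e. that $M$ is orthogonal to the span $U$ of the permutation matrices; since $M\in V_0\subseteq U$, this forces $M=0$. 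Conversely $M=0$ gives $\langle N,M_\pi\rangle=q^{r-1}$ for all $\pi$, hence \ref{it:parallel_edge}. So \ref{it:balanced_edge} and \ref{it:parallel_edge} are equivalent.

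Finally I would bring in condition \ref{it:subset_permutations_edge}. Assume first that such a $T$ is given. For $\pi\in T$ the equality $d_H(\pi\circ f,g)=(q-1)q^{r-1}$ gives $\langle M,M_\pi\rangle=0$, and $M\perp J$ since $M\in V_0$. Each $M_\pi$ with $\pi\in T$ has a $0$ in position $(a,b)$ (because $a\pi\neq b$), whereas $J_{ab}=1$; hence $J\notin\mathrm{span}\{M_\pi:\pi\in T\}$, and together with the assumed linear independence of the $M_\pi$ this makes $\{M_\pi:\pi\in T\}\cup\{J\}$ a set of $(q-1)^2+1$ linearly independent matrices inside the $((q-1)^2+1)$-dimensional space $U$, hence a spanning set of $U$. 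Then $M$ is orthogonal to all of $U$ and lies in $U$, so $M=0$, which is condition \ref{it:edge} by the first paragraph. For the converse, if \ref{it:edge} holds then, by the equivalences already proved, $d_H(\pi\circ f,g)=(q-1)q^{r-1}$ for every $\pi\in S_q$, so the distance requirement in \ref{it:subset_permutations_edge} holds for any $T$ whatsoever; it therefore suffices to produce, for some $a,b$, a set $T\subseteq\{\pi\in S_q:a\pi\neq b\}$ of size $(q-1)^2$ whose permutation matrices are linearly independent. For any fixed $a,b$, all matrices $M_\pi$ with $a\pi\neq b$ lie in the subspace $H=\{A\in U:A_{ab}=0\}$, which has dimension $(q-1)^2$ (it does not contain $J$), so it is enough to show that $\{M_\pi:a\pi\neq b\}$ spans $H$; then any basis of $H$ chosen from that set serves as $T$.

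The spanning statement just mentioned is, I expect, the only real obstacle. Dualising inside $V_0$ (using $M_\pi-\tfrac{1}{q}J\in V_0$ and $V_0\perp J$), it is equivalent to: the only $W\in V_0$ with $\langle W,M_\pi\rangle=0$ for every $\pi$ satisfying $a\pi\neq b$ is $W=0$. I would prove this by averaging. Fix $\pi^{*}$ with $a\pi^{*}=b$. For each $c\neq a$ the permutation $\pi^{*}\circ(a\,c)$ sends $a$ to $\pi^{*}(c)\neq b$, so $\langle W,M_{\pi^{*}\circ(a\,c)}\rangle=0$; subtracting this from $\langle W,M_{\pi^{*}}\rangle$ and summing over all $c\neq a$, while using that the row sums and column sums of $W$ vanish, yields the identity $(q-2)\langle W,M_{\pi^{*}}\rangle=qW_{ab}$. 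Summing this over the $(q-1)!$ permutations $\pi^{*}$ with $a\pi^{*}=b$ and computing $\langle W,\sum_{a\pi^{*}=b}M_{\pi^{*}}\rangle$ directly (this sum has $(q-1)!$ in position $(a,b)$, $0$ elsewhere in row $a$ and in column $b$, and $(q-2)!$ in every other position) forces $W_{ab}=0$; then $\langle W,M_{\pi^{*}}\rangle=0$ for these $\pi^{*}$ too, so $W$ is orthogonal to every permutation matrix, whence $W=0$. The case $q=2$ is trivial, as then $\dim V_0=1$ and $W_{ab}=0$ already gives $W=0$; alternatively, an explicit $T$ can be exhibited (for instance the $(q-1)+(q-1)(q-2)=(q-1)^2$ transpositions and $3$-cycles of $S_q$ that displace a fixed point).
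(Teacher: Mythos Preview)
Your argument is correct and follows essentially the same route as the paper: encode the pair $(f,g)$ in the doubly-stochastic-like matrix $N$ (the paper calls the vectorised version $\gamma$), use the well-known fact that permutation matrices span a space of dimension $(q-1)^2+1$, and exploit that the matrices $M_\pi$ with $\pi\in T$ all vanish at the entry $(a,b)$ so that together with one further matrix they span the whole permutation span.

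There is a small packaging difference in the implication $\ref{it:subset_permutations_edge}\Rightarrow\ref{it:balanced_edge}$. The paper works in the ambient space $\mathbb{R}^{q\times q}$: it introduces the $(2q-1)$-dimensional space $CR$ of matrices supported on a single row or column, shows $CR\cap P=\mathbb{R}J$, and hence (since $J\notin Q=\mathrm{span}\{M_\pi:\pi\in T\}$) that $Q\oplus CR=\mathbb{R}^{q\times q}$, giving a nonsingular $q^2\times q^2$ system for $\gamma$. You instead stay inside $U=\mathbb{R}J\oplus V_0$ and argue by orthogonality: $\{M_\pi:\pi\in T\}\cup\{J\}$ is a basis of $U$, $M=N-q^{r-2}J\in V_0\subseteq U$ is orthogonal to each of them, hence $M=0$. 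These are dual formulations of the same linear-algebra fact; yours is a little cleaner because it avoids the auxiliary space $CR$.

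Finally, for $\ref{it:parallel_edge}\Rightarrow\ref{it:subset_permutations_edge}$ the paper simply says ``clearly'', taking for granted that a set $T$ with the stated combinatorial properties exists. You actually prove this: your averaging identity $(q-2)\langle W,M_{\pi^*}\rangle=qW_{ab}$ for $\pi^*$ with $a\pi^*=b$, followed by the summation over all such $\pi^*$, correctly yields $W_{ab}=0$ and then $W=0$. This neatly fills the gap the paper leaves.
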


\begin{proof}
The first two properties are clearly equivalent (see the proof of Proposition \ref{prop:M(q,r)}). Let us prove that the second one implies the third one. Suppose $g$ is balanced over all pre-images of $f$ (and hence, over all pre-images of $f'$ for any parallel $f'$ of $f$). Then $g$ agrees with $f'$ in $q^{r-2}$ positions on each pre-image; there are $q$ pre-images, yielding $d_H(f',g) = q^r - q^{r-1}$. Also, the third property clearly implies the fourth one.

Let us now show that the fourth property implies the second one. Foremost, recall that the subspace $P$ of $\mathbb{R}^{q \times q}$ spanned by all $q \times q$ permutation matrices has dimension $(q-1)^2 + 1$ \cite{FM60}. For all $0 \leq i \leq q-1$, consider the matrix $R_i \in \mathbb{R}^{q \times q}$ whose only nonzero row $i$ is equal to the all-zero vector; consider also $C_j = R_j^T$ for all $j$. Together, these $2q$ matrices span a linear subspace $CR$ of dimension $2q-1$ (the all-ones matrix ${\bf 1}^{q \times q}$ is in the intersection). Let us prove that $\dim(CR \cap P) = 1$ (again, the line spanned by ${\bf 1}^{q \times q}$). Suppose $N \in CR \cap P$, then
$$
    N = \sum_\pi \gamma_\pi M_\pi = \sum_i \alpha_i R_i + \sum_j \beta_j C_j.
$$
The sum of all entries on each row and each column is equal to $s = \sum_\pi \gamma_\pi$. For row $i$, this yields $s = \sum_j \beta_j + n\alpha_i$, from which we have $\alpha_i = \alpha$ is a constant; similarly, we have $\beta_j = \beta$ and $N = (\alpha+\beta){\bf 1}^{q \times q}$.

Since the $(a,b)$ entry of $M_\pi$ is zero for all $\pi \in T$, the subspace $Q$ spanned by the permutation matrices in $T$ does not contain the all-ones matrix and hence $Q \oplus CR = \mathbb{R}^{q \times q}$. We now expand those matrices to $q^2$-dimensional row vectors by concatenating their rows. We can represent $g$ via a column vector $\gamma \in \mathbb{R}^{q^2}$ where $\gamma_{i+qj} = |f^{-1}(i) \cap g^{-1}(j)|$. The fact that $g$ is balanced is equivalent to $C_j \gamma = q^{r-1}$ for all $j$; similarly, $f$ balanced yields $R_i \gamma = q^{r-1}$ for all $0 \leq i \leq q-2$. Also, $d(\pi \circ f, g) = q^r - q^{r-1}$ yields $M_\pi \gamma = q^{r-1}$ for all $\pi \in T$. Overall, we obtain $M\gamma = q^{r-1}{\bf 1}^{q^2 \times 1}$, where $M \in \mathbb{R}^{q^2 \times q^2}$ is non-singular. Thus there is a unique solution, given by $\gamma = q^{r-2}{\bf 1}^{q^2 \times 1}$, i.e. when $g$ restricted to the set $f^{-1}(a)$ is balanced for all $a \in A$.
\end{proof}

We remark that Condition \ref{it:parallel_edge} could be replaced by: $d_H(f',g)$ is a constant for all $f'$ parallel to $f$. This holds since double counting yields
$$
    \sum_{\pi \in S_A} d_H(\pi \circ f,g) = q^r (q-1)(q-1)!
$$
for any function $g: A^r \rightarrow A$ (not necessarily balanced). However, this simplification cannot be applied to Condition \ref{it:subset_permutations_edge}.


\section{Rank and closure} \label{sec:rk_cl}

\subsection{Rank}

This section generalises some concepts from matroid theory to the idea of combinatorial representations. Let us first define a rank function.

\begin{definition} \label{def:rank}
Let $(E,\mathcal{B})$ be a family of bases of rank $r$. A function $\rk : 2^E \rightarrow [0,r]$ is a {\em rank function} for $(E,\mathcal{B})$ if it satisfies the following conditions:
\begin{itemize}
    \item If $X \subseteq E$, then $0 \leq \rk(X) \leq |X|$.

    \item If $X \subseteq Y \subseteq E$, then $\rk(X) \leq \rk(Y)$.

    \item $\rk$ is submodular, i.e. if $X$ and $Y$ are subsets of $E$, then
    \begin{equation} \label{eq:submodular}
        \rk(X \cup Y) + \rk(X \cap Y) \leq \rk(X) + \rk(Y).
    \end{equation}

    \item If $|X| = r$, then $\rk(X) = r$ if and only if $X \in \mathcal{B}$.
\end{itemize}
\end{definition}

The first three conditions are equivalent to: $\rk(X)$ is a polymatroid~\cite{Oxl06}. This definition implies that if $X \subseteq b \in \mathcal{B}$, then $|X| = \rk(X)$.

It is easy to show that the only case where modular equality is reached for all subsets is for the uniform matroid $U_{r,r}$. Indeed, we have for any $e \in E$
$$
    \rk(E \backslash \{e\}) = \rk(E) + \rk(\emptyset) - \rk(\{e\}) = r-1,
$$
and hence any basis must contain $e$. Thus only $E$ itself can be a basis, and $(E,\mathcal{B}) = U_{r,r}$.

Below we show that every representation leads to a rank function. In particular, if $f$ is a representation by matrix-linear functions, then its corresponding rank function takes rational values.

\begin{proposition} \label{prop:r_f}
Let $(E,\mathcal{B})$ be a family of bases and let $f = (f_1,\ldots,f_n)$ be a representation for it over a finite alphabet of size $q$. Then $r_f$ defined as
$$
    r_f(X) := H(f_X) = - \sum_{a \in f_X(A^r)} \frac{|f_X^{-1}(a)|}{q^r} \log_q \left\{ \frac{|f_X^{-1}(a)|}{q^r} \right\} = r - q^{-r} \sum_{a \in f_X(A^r)} |f_X^{-1}(a)| \log_q |f_X^{-1}(a)|
$$
where $H$ is the $q$-ary entropy function, is a rank function for $(E,\mathcal{B})$.
\end{proposition}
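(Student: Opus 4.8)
The plan is to verify the four axioms of Definition~\ref{def:rank} for $r_f(X) = H(f_X)$ in turn, treating the last axiom as the one that exhibits the link with $\mathcal{B}$ and the submodularity axiom as the main technical content. Throughout I will identify $f_X$ with the partition $\pi_X$ of $A^r$ into the nonempty fibers $f_X^{-1}(a)$, so that $r_f(X)$ is exactly the $q$-ary Shannon entropy of the uniform distribution on $A^r$ pushed forward through $f_X$; equivalently, $r_f(X) = H(\mathbf{x}_X)$ where $\mathbf{x}_X = (f_i(\mathbf{x}))_{i \in X}$ is the random vector obtained by applying the coordinate functions $f_i$, $i \in X$, to a uniformly random $\mathbf{x} \in A^r$. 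This ``entropy = rank'' dictionary is what makes all four conditions fall out of standard information-theoretic inequalities; the base $q$ of the logarithm is chosen precisely so that a single balanced coordinate contributes $1$.

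First I would check boundedness: since $f_X$ takes values in $A^{|X|}$ (a set of size $q^{|X|}$) and its domain has $q^r$ points, the push-forward distribution is supported on at most $\min(q^{|X|}, q^r)$ atoms, so $0 \le H(f_X) \le \min(|X|, r)$; in particular $0 \le r_f(X) \le |X|$ and $r_f(X) \le r$, so $r_f$ indeed maps into $[0,r]$ and satisfies the first axiom. Monotonicity is immediate from the partition viewpoint: if $X \subseteq Y$ then $\pi_Y$ refines $\pi_X$ (knowing the values of $f_i$ for $i \in Y$ determines them for $i \in X$), and entropy is monotone under refinement of the generating partition, i.e. $H(f_X) \le H(f_Y)$; in random-variable language this is just $H(\mathbf{x}_X) \le H(\mathbf{x}_X, \mathbf{x}_Y) = H(\mathbf{x}_Y)$. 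Submodularity is the classical submodularity of Shannon entropy: writing $\mathbf{x}_X, \mathbf{x}_Y$ for the relevant random vectors, $H(\mathbf{x}_{X \cup Y}) + H(\mathbf{x}_{X \cap Y}) \le H(\mathbf{x}_X) + H(\mathbf{x}_Y)$ is equivalent to the nonnegativity of the conditional mutual information $I(\mathbf{x}_{X \setminus Y} ; \mathbf{x}_{Y \setminus X} \mid \mathbf{x}_{X \cap Y}) \ge 0$, which holds for any joint distribution. I would cite this as the well-known fact that the entropy function of a collection of random variables is a polymatroid (consistent with the remark after the definition), rather than reprove it.

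The one place where the family $\mathcal{B}$ enters is the last axiom, so this is where I would be most careful, though it is still short. Suppose $|X| = r$. The map $f_X : A^r \to A^r$ is a function between two sets of the same cardinality $q^r$, so it is a bijection if and only if it is injective, if and only if every fiber $f_X^{-1}(a)$ has size at most $1$, if and only if the push-forward of the uniform distribution is itself uniform on all $q^r$ points of $A^r$. Now $H$ of a distribution on a $q^r$-point set equals $r$ (its maximum, by the strict concavity of $t \mapsto -t\log_q t$, i.e. the equality case of Jensen / Gibbs' inequality) precisely when that distribution is uniform. Hence $r_f(X) = r$ iff $f_X$ is a bijection, and by Definition~\ref{def:comb_rep} this happens iff $X \in \mathcal{B}$. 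Finally, the rational-values remark for matrix-linear $f$: if each $f_i$ is an $\mathbb{F}_q$-linear (more generally matrix-linear) map, then $f_X$ is linear, its image is a subspace, and every nonempty fiber is a coset of the kernel, so all fibers have the same size $q^{r}/|\mathrm{im}(f_X)| = q^{r - \dim \mathrm{im}(f_X)}$; plugging this into the closed form gives $r_f(X) = \dim_{\mathbb{F}_q} \mathrm{im}(f_X) \in \{0,1,\dots,r\} \subseteq \mathbb{Q}$ (and for general matrix-linear functions over an alphabet of prime-power size one gets $r_f(X) = \log_q |\mathrm{im}(f_X)|$, still rational). I expect the only real decision point in writing this up is how much of the standard polymatroid/entropy machinery to import as a black box versus spell out; the mathematics is entirely routine once the entropy-as-rank identification is made.
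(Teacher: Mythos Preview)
Your proposal is correct and follows essentially the same approach as the paper: the paper's proof simply invokes Shannon's inequality (citing \cite{CT91} and \cite{ZY97}) for submodularity and declares the remaining axioms straightforward, while you spell out all four verifications explicitly via the entropy-of-a-random-vector interpretation. Your treatment of the fourth axiom and of the rational-values remark for matrix-linear representations is more detailed than the paper's, but there is no difference in strategy.
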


\begin{proof}
The proof of submodularity simply follows Shannon's inequality~\cite[Eq. (2.93)]{CT91} and was already given in~\cite{ZY97}. The other properties are straightforward.
\end{proof}

\paragraph{Remark 1.} A subset $I$ of $E$ is $f$-independent if and only if $r_f(I)=|I|$.

\paragraph{Remark 2.} The converse of Proposition \ref{prop:r_f} is not true: there exist rank functions which do not correspond to any combinatorial representation. This fact was proved in~\cite{ZY97}, where they showed that any entropy function satisfies an additional inequality. In particular, they demonstrate that the rank function used in the proof of Theorem \ref{th:half_integers} below with $p=2$ over the family $(E = \{1,2,3,4\},\mathcal{B} = \{\{1,2\}\})$ cannot be viewed as the entropy function of any representation for $(E,\mathcal{B})$. 

Let us denote
$$
    \mr(X) = \max_{b \in \mathcal{B}}|b \cap X|,
$$
when $(E,\mathcal{B})$ is a matroid, this is its rank function. The rank function for the uniform matroid $U_{r,n}$ is given by
$$
    \Mr(X) = \min\{r,|X|\}.
$$
It is easily shown that for any family $(E,\mathcal{B})$ and any rank function, we have
$$
    \mr(X) \leq \rk(X) \leq \Mr(X).
$$
Therefore, $(E,\mathcal{B})$ is a matroid if and only if it has an {\em integer-valued} rank function. Theorem \ref{th:half_integers} below shows that for any divisor $p \geq 2$, there exists a rank function which takes values over the integers divided by $p$. Therefore, although matroids are viewed as special due to the submodularity of the rank function, it seems that the particularity of matroids actually resides in the {\em integrality} of the rank function.

\begin{theorem} \label{th:half_integers}
For any $p \geq 2$, $(E,\mathcal{B})$ has a rank function which takes values over the integers divided by $p$.
\end{theorem}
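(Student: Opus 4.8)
The plan is to exhibit an explicit rank function obtained by perturbing the uniform matroid rank function $\Mr(X)=\min\{r,|X|\}$ downwards by the fixed fraction $\frac1p$ exactly on the non-bases. Precisely, I would set
$$
\rk(X) = \Mr(X) - \tfrac1p\, c(X),\qquad
c(X)=\begin{cases}1 & \text{if } |X|=r \text{ and } X\notin\mathcal B,\\ 0 & \text{otherwise.}\end{cases}
$$
Since $\Mr$ is integer-valued and $c(X)\in\{0,1\}$, all values of $\rk$ lie in $\frac1p\mathbb Z$; the chain $0\le\rk(X)\le\Mr(X)\le\min\{r,|X|\}$ gives both the range $[0,r]$ and the bound $\rk(X)\le|X|$. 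The fourth condition of Definition~\ref{def:rank} is immediate: for $|X|=r$ we get $\rk(X)=r-\frac1p c(X)$, which equals $r$ precisely when $c(X)=0$, i.e.\ when $X\in\mathcal B$. Monotonicity is a short case analysis according to whether $|X|$ and $|Y|$ lie below, at, or above $r$, using only $r-\frac1p\ge r-1$.

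The real work is submodularity. I would write $D=\Mr(X)+\Mr(Y)-\Mr(X\cup Y)-\Mr(X\cap Y)\ge 0$ (submodularity of $\Mr$) and $\Delta c=c(X)+c(Y)-c(X\cup Y)-c(X\cap Y)$, so that inequality~\eqref{eq:submodular} for $\rk$ is equivalent to $D\ge\frac1p\Delta c$. Because $\Delta c\le c(X)+c(Y)\le 2$ and $p\ge2$, the right-hand side is at most $\frac2p\le 1$; since $\Delta c$ is an integer, it therefore suffices to prove that $\Delta c\ge 1$ implies $D\ge 1$.

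To do this I would assume $\Delta c\ge1$ and, without loss of generality, $c(X)=1$, i.e.\ $|X|=r$ and $X\notin\mathcal B$. Then $|X\cup Y|\ge r$ forces $\Mr(X\cup Y)=r=\Mr(X)$, so $D=\Mr(Y)-\Mr(X\cap Y)\ge 0$, with equality iff $\min\{|Y|,r\}=\min\{|X\cap Y|,r\}$; since $X\cap Y\subseteq X$ and $|X|=r$, this equality forces either $Y\subseteq X$ or $X\subseteq Y$. But $Y\subseteq X$ gives $X\cup Y=X$ and $X\cap Y=Y$, hence $\Delta c=0$; and $X\subseteq Y$ gives $X\cup Y=Y$ and $X\cap Y=X$, hence $\Delta c=0$ again --- both contradicting $\Delta c\ge1$. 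So $D\ne 0$, and being a difference of integers $D\ge1$, which completes the verification. The one subtle point, which I regard as the main (if modest) obstacle, is exactly this last step: one must use that $\Mr$ changes only in integer steps, so that any nonzero submodular defect of $\Mr$ is at least $1$ and hence absorbs the fractional correction $\frac1p\Delta c\le 1$. For $p=2$ this is tight, with equality holding in~\eqref{eq:submodular} for pairs of distinct non-basis $r$-sets.
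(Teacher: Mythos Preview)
Your proof is correct and uses exactly the same rank function as the paper (the paper writes it piecewise, you write it as $\Mr - \frac1p c$, but the functions coincide). The submodularity check is also essentially the same, though your organisation via the defect $D$ of $\Mr$ and the correction $\Delta c$ is slightly more uniform than the paper's direct four-case split; both arguments ultimately reduce to observing that whenever the perturbation $c$ could hurt, the integer-valued $\Mr$ already has slack $\ge 1$, which absorbs the fractional correction $\le \frac2p \le 1$.
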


\begin{proof}
We claim that the function $\rk(X)$ defined by
\begin{equation} \nonumber
    \rk(X) = \begin{cases}
    |X| & \mathrm{if}\, |X| \leq r-1 \,\mathrm{or}\, X \in \mathcal{B}\\
    r- \frac{1}{p} & \mathrm{if}\, |X| = r, X \notin \mathcal{B}\\
    r & \mathrm{if}\, |X| \geq r+1
    \end{cases}
\end{equation}
is a rank function for $(E, \mathcal{B})$. Only the submodular inequality is in question, so we have to check that (\ref{eq:submodular}) holds for any $X,Y \subseteq E$ (with $\rk(X) \geq \rk(Y)$ without loss of generality). This clearly holds (with equality) if $Y \subseteq X$ hence we assume this does not occur. We need to study four cases.
\begin{itemize}
    \item Case I: $|X|, |Y| \leq r-1$. Then
    $$
        \rk(X) + \rk(Y) = |X| + |Y| = |X \cup Y| + |X \cap Y| \geq \rk(X \cup Y) + \rk(X \cap Y).
    $$

    \item Case II: $\rk(X) = r - \frac{1}{p}$, $|Y| \leq r-1$. Then $\rk(X \cap Y) = |X \cap Y| \leq |Y| - 1$ and hence
    $$
        \rk(X) + \rk(Y) \geq r + (|Y| - 1) \geq \rk(X \cup Y) + \rk(X \cap Y).
    $$

    \item Case III: $\rk(X) = \rk(Y) = r - \frac{1}{p}$. Then $|X \cap Y| \leq r - 1$ and hence
    $$
        \rk(X) + \rk(Y) = r + (r - \frac{2}{p}) \geq \rk(X \cup Y) + \rk(X \cap Y).
    $$

    \item Case IV: $\rk(X) = r$. Then
    $$
        \rk(X) + \rk(Y) = r + \rk(Y) \geq \rk(X \cup Y) + \rk(X \cap Y).
    $$
\end{itemize}
\end{proof}

Theorem \ref{th:half_integers} also shows that the supremum $\Mr(X)$ of all rank functions can be approached. Therefore, one can only expect to derive lower bounds on any rank function, but not any upper bound other than $\Mr(X)$.

It seems difficult to design a generic rank function which takes significantly lower values than the one in Theorem \ref{th:half_integers}. However, by an argument similar to that of Theorem \ref{th:half_integers}, we can show that the following is a rank function for $(E,\mathcal{B})$:
$$
    \rk(X) = \begin{cases}
    |X| & \mathrm{if}\, X \subseteq b \in \mathcal{B}\\
    r & \mathrm{if}\, |X| \geq r+1\\
    |X| - 2^{|X| - r - 1} & \mathrm{otherwise}.
    \end{cases}
$$

\subsection{Bounds on rank functions}

In view of Proposition \ref{prop:r_f}, studying rank functions in general can help determine some constraints on the functions used in a representation. The lower bound of $\mr(X)$ for any rank function is tight for the rank function of a matroid. However, the exchange axiom implies that matroids are typically dense, i.e. the number of bases is large, and (leaving out trivial cases) for any basis $b$, there exists another basis $b'$ with $|b \cap b'| = r-1$. Therefore, in order to obtain lower bounds on the rank function which differ from $\mr(X)$, we consider sparse families.

Proposition \ref{prop:bound_I} shows that a single ``isolated basis'' leads to significant gap between $\mr(X)$ and the rank of $X$ for some $X$. The restriction on fractions only serves the sakes of conciseness and ease of presentation.

\begin{proposition} \label{prop:bound_I}
Let $(E,\mathcal{B})$ be a family of rank $r$, and let
$$
    I = \min_{b \in \mathcal{B}} \max_{c \in \mathcal{B}, c \neq b} |b \cap c|.
$$
Then there exists $X \subseteq E$ such that $|X| = r$, $\mr(X) = \frac{r+I}{2}$, and for any rank function $\rk(X)$ for $\mathcal{B}$, we have $\rk(X) \geq \frac{3r+I}{4}$. Thus
\begin{equation} \nonumber
    \rk(X) - \mr(X) \geq \frac{r-I}{4},
\end{equation}
provided all fractions are integers.
\end{proposition}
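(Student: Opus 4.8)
The plan is to exhibit a suitable set $X$ explicitly and then bound any rank function on $X$ from below using submodularity together with the last axiom of Definition~\ref{def:rank}. First I would pick a basis $b \in \mathcal{B}$ that attains the outer minimum in the definition of $I$, so that $|b \cap c| \le I$ for every other basis $c$; write $b = \{1,\ldots,r\}$ after reindexing. The natural candidate for $X$ is a set with $|X| = r$ that overlaps $b$ in exactly $(r+I)/2$ elements and is disjoint from $b$ in the remaining $(r-I)/2$ elements, chosen so that $X \notin \mathcal{B}$. Since any basis $c \ne b$ meets $b$ in at most $I$ elements, while $X$ already meets $b$ in $(r+I)/2 > I$ elements, no basis other than $b$ can contain too much of $X$; a short counting check (the part of $X$ outside $b$ has size $(r-I)/2$, and any $c\ne b$ can pick up at most that many elements of $X$ from outside $b$ plus at most $I$ from inside $b$) should give $\mr(X) = \max_{c}|X\cap c| = (r+I)/2$, with the maximum attained by $b$ itself. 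I expect the main bookkeeping to be ensuring such an $X$ actually exists inside $E$ — i.e. that $E$ is large enough and that one can avoid accidentally landing in $\mathcal{B}$ — but since $X \notin \mathcal{B}$ and $|X| = r$ is all we need, this is a matter of genericity and counting, not a real obstacle.

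The heart of the argument is the lower bound $\rk(X) \ge (3r+I)/4$. The idea is to split $X$ as $X = Y \cup Z$ where $Y = X \cap b$ has size $(r+I)/2$ and $Z = X \setminus b$ has size $(r-I)/2$, and then to apply submodularity along a chain interpolating between $X$ and $b$. Since $Y \subseteq b \in \mathcal{B}$, the remark after Definition~\ref{def:rank} gives $\rk(Y) = |Y| = (r+I)/2$. Now consider $b \cup Z$: it has $r + (r-I)/2$ elements, so it strictly contains the basis $b$, forcing $\rk(b \cup Z) = r$. Submodularity applied to $b$ and $X$ (whose union is $b \cup Z$ and whose intersection is $Y$) gives $\rk(b\cup Z) + \rk(Y) \le \rk(b) + \rk(X)$, i.e. $r + (r+I)/2 \le r + \rk(X)$, which already yields $\rk(X) \ge (r+I)/2$ — not yet enough. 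To sharpen this to $(3r+I)/4$ I would instead interpolate more carefully: build a set $W$ with $Y \subseteq W \subseteq b$ of intermediate size and apply submodularity to the pair $W$ and $X$, or iterate the argument with several basis-extensions, so that the "$r$" and the "$|Y|$" contributions get averaged. The precise choice of intermediate set should be calibrated so that the averaged bound lands exactly at $(3r+I)/4 = \tfrac12\big(r + \tfrac{r+I}{2}\big)$, the midpoint between $r$ and $\mr(X)$.

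Concretely, I anticipate the clean route is: choose $W \subseteq b$ with $W \supseteq Y$ and $|W| = (3r+I)/4$ (this is between $(r+I)/2$ and $r$, hence realisable); since $W \subseteq b$, we have $\rk(W) = |W| = (3r+I)/4$. Then $W \cup X$ again contains a set extending into $b$ enough that one can pin its rank, and $W \cap X = Y$ with known rank $(r+I)/2$. Feeding these into \eqref{eq:submodular} and rearranging isolates $\rk(X) \ge \rk(W) + \rk(W\cup X) - \rk(Y)$; with the values above this is $(3r+I)/4 + (\text{something} \ge (3r+I)/4) - (r+I)/2$, and the bracket has to be shown to be at least $(3r+I)/4$, which brings us back to bounding the rank of $W \cup X$ — a set of size $(3r+I)/4 + (r-I)/2$, again containing no basis unless it contains $b$. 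The main obstacle, and the step requiring the most care, is exactly this: controlling $\rk(W \cup X)$ from below, since $W \cup X$ need not contain any basis at all, so one cannot simply invoke "contains a basis $\Rightarrow$ rank $r$". Here I would lean on the monotonicity axiom together with a second application of submodularity relating $W\cup X$ to $b \cup Z$ (whose rank is $r$), closing the loop. The "provided all fractions are integers" hypothesis is used only to make $(r+I)/2$, $(r-I)/2$ and $(3r+I)/4$ legitimate cardinalities so that the interpolating sets $W$, $X$ exist with the stated sizes.
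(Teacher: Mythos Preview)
Your computation of $\mr(X)$ is fine and matches the paper's, but the rank lower bound has a real gap. Your interpolation scheme with $W$ between $Y = X\cap b$ and $b$ cannot reach $(3r+I)/4$: from $\rk(W)+\rk(X)\ge\rk(W\cup X)+\rk(Y)$ you need $\rk(W\cup X)\ge r$, yet $W\cup X = W\cup Z$ contains no basis in general. The ``second application of submodularity'' you propose, comparing $W\cup Z$ with $b$, only gives $\rk(W\cup Z)+\rk(b)\ge\rk(b\cup Z)+\rk(W)$, i.e.\ $\rk(W\cup Z)\ge|W|$, which fed back yields merely $\rk(X)\ge |Y|=(r+I)/2$ again --- exactly the bound you already had. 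Iterating does not improve it; you are stuck at $(r+I)/2$ no matter how $W$ is chosen inside $b$, because every such inequality is of the form $\rk(X)\ge\rk(W\cup Z)-|W|+|Y|$ and the right-hand side is maximised (to $(r+I)/2$) at $W=b$.

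The missing idea is to bring in a \emph{second} basis. The paper chooses a basis $c$ with $|b\cap c|=I$, takes $Z\subseteq c\setminus b$ of size $J=(r-I)/2$, and splits $b\setminus c$ into two halves $X',Y'$ of size $J$. Setting $X=(b\cap c)\cup X'\cup Z$ and $Y=(b\cap c)\cup Y'\cup Z$, one has $X\cup Y\supseteq b$ (so $\rk(X\cup Y)=r$) and $X\cap Y=(b\cap c)\cup Z\subseteq c$ (so $\rk(X\cap Y)=I+J$). A single application of submodularity to the pair $(X,Y)$ then gives $\rk(X)+\rk(Y)\ge r+I+J$, whence the larger of the two is at least $(r+I+J)/2=(3r+I)/4$. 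The crucial point you were missing is that the companion set must leave $b$ (so that the intersection can sit inside a different basis $c$), not interpolate within $b$.
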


\begin{proof}
Let $b \in \mathcal{B}$ such that the intersection of any basis with $b$ has size at most $I$ and let $c \in \mathcal{B}$ such that $|b \cap c| = I$. Define two sets $X,Y \subseteq E$ as follows: $X = (b \cap c) \cup X' \cup Z$ and $Y = (b \cap c) \cup Y' \cup Z$, where $X',Y' \subseteq b$ and $Z \subseteq c$ have cardinality $J = \frac{r-I}{2}$ and none of the constituents intersect. Therefore, $|X| = |Y| = r$ and
\begin{equation} \nonumber
    \rk(X) + \rk(Y) \geq \rk(X \cup Y) + \rk(X \cap Y) = r + I + J,
\end{equation}
since $b \subseteq X \cup Y$ and $X \cap Y = Z \cup (b \cap c) \subseteq c$. Without loss, suppose $\rk(X) \geq \rk(Y)$, then $\rk(X) \geq \frac{r + I + J}{2} = \frac{3r+I}{4}$. Let us now show that $\mr(X) = I + J = \frac{r+I}{2}$. First, we have $|b \cap X| = |c \cap X| = I + J$. Second, for any other $d \in \mathcal{B}$, we have the following inequalities
\begin{eqnarray*}
    |d \cap Z| \leq |Z| &\leq& J,\\
    |d \cap X'| + |d \cap (b \cap c)| \leq |d \cap b| &\leq& I,
\end{eqnarray*}
and hence
\begin{equation} \nonumber
    |d \cap X| = |d \cap (b \cap c)| + |d \cap X'| + |d \cap Z| \leq J + I.
\end{equation}
\end{proof}

The argument is strengthened in Theorem \ref{th:no_strong} below, which exhibits a set with $\mr(X) = 1$ and yet for which the rank is arbitrarily close to $r$. Recall that a {\em transversal} for $(E, \mathcal{B})$ is a set of elements in $E$ such that any $b \in \mathcal{B}$ contains at least one element of the transversal.

\begin{theorem} \label{th:no_strong}
Let $k$ denote the minimum size of a transversal for $(E,\mathcal{B})$, then there exists a set of elements $X \subseteq E$ such that $|X| = k$, $\mr(X) = 1$, and
\begin{equation} \nonumber
    \rk(X) \geq r \left( 1 - \left( 1- \frac{1}{r} \right)^k \right).
\end{equation}
\end{theorem}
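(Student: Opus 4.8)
The plan is to build $X$ greedily, one element at a time, always choosing an element that forces the rank of the partial set to grow, and then to read off $\rk(X)$ from a recursion. Set $X_0=\emptyset$ and $\rho_i=\rk(X_i)$; having built $X_{i-1}$, if it is not yet a transversal pick a basis $b_i\in\mathcal B$ that it misses, so $b_i\cap X_{i-1}=\emptyset$, and let $x_i\in b_i$ maximise $\rk(X_{i-1}\cup\{x_i\})$, breaking ties, where possible, so that $x_i$ lies in no basis already meeting $X_{i-1}$; put $X_i=X_{i-1}\cup\{x_i\}$. The process is finite since each step hits a previously unhit basis, and it stops at some $X_m$ which is a transversal.

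The crux is a submodular averaging inequality controlling one step. Listing $b_i=\{e_1,\dots,e_r\}$ and telescoping along this order, submodularity (adding an element to a larger set raises the rank by no more) gives $\rk(X_{i-1}\cup b_i)-\rk(X_{i-1})\le\sum_{j=1}^{r}\bigl(\rk(X_{i-1}\cup\{e_j\})-\rk(X_{i-1})\bigr)$. Since $b_i\in\mathcal B$, monotonicity together with $\rk(b_i)=r$ and $\rk\le r$ forces $\rk(X_{i-1}\cup b_i)=r$, so the $r$ increments on the right sum to at least $r-\rho_{i-1}$; hence some $e\in b_i$ has $\rk(X_{i-1}\cup\{e\})-\rho_{i-1}\ge 1-\rho_{i-1}/r$, and as $x_i$ attains the maximum we get $\rho_i\ge\rho_{i-1}(1-1/r)+1$. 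Starting from $\rho_0=\rk(\emptyset)=0$, the substitution $\sigma_i=r-\rho_i$ converts this to $\sigma_i\le\sigma_{i-1}(1-1/r)$, so $\sigma_i\le r(1-1/r)^{i}$ and $\rho_i\ge r\bigl(1-(1-1/r)^{i}\bigr)$; in particular $\rk(X_m)\ge r\bigl(1-(1-1/r)^{m}\bigr)$.

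It remains to match the three features demanded of $X$: $|X|=k$, $\mr(X)=1$, and the displayed bound. Since $r\bigl(1-(1-1/r)^{m}\bigr)$ is increasing in $m$, it is enough to run the greedy so that it terminates after exactly $k$ steps and never places two chosen elements into a common basis, i.e.\ so that $X$ is a minimum transversal with $\mr(X)=1$. When the bases are sufficiently spread out — for instance pairwise disjoint, as for $\mathcal B_{r,k}$ — this is automatic: the greedy picks one element from each block and halts after $k$ of them, and no block contains two of the $x_i$. In general one instead starts the greedy from a minimum transversal $X$ already meeting every basis in exactly one element, and uses minimality to supply, for each $x_i$, a basis $b_i$ with $b_i\cap X=\{x_i\}$ (automatically disjoint from $X_{i-1}$) to feed the averaging step. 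I expect this reconciliation — that the rank‑maximising greedy choices can be realised inside a minimum transversal with $\mr(X)=1$ — to be the only real obstacle; the rank inequality itself is forced by submodularity as above.
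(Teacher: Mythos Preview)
Your core argument is exactly the paper's: at step $i$, pick a basis $b$ disjoint from $X_{i-1}$, apply submodularity to get $\sum_{e\in b}\rk(X_{i-1}\cup\{e\})\ge \rk(X_{i-1}\cup b)+(r-1)\rk(X_{i-1})=r+(r-1)\rho_{i-1}$, and take the best $e$ to obtain $\rho_i\ge 1+(1-1/r)\rho_{i-1}$, which unwinds to the stated bound.

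Your worry about achieving $|X|=k$ is unnecessary and the paper handles it more simply than you do. You do not need the greedy to ``terminate'' as a transversal; you just run it for exactly $k$ steps. This is always possible because $k$ is the \emph{minimum} transversal size, so for every $i\le k$ the set $X_{i-1}$ has $|X_{i-1}|=i-1<k$ and is therefore not a transversal, guaranteeing a basis disjoint from it. Stop after step $k$ and set $X=X_k$; then $|X|=k$ and the rank bound holds. There is no need to argue that the rank-maximising choices coincide with those of some pre-chosen minimum transversal.

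On the point you flag as ``the only real obstacle'', namely $\mr(X)=1$: the paper's proof simply carries the clause $\mr(X_i)=1$ in its induction hypothesis and does not verify it in the inductive step; nor is it used anywhere in deriving the rank inequality (only the fact that $X_{i-1}$ misses some basis is used, and that follows from $|X_{i-1}|<k$). So your hesitation is well placed, but you should not expect to find a missing trick in the paper that resolves it---the paper does not supply one.
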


\begin{proof}
We prove, by induction on $1 \leq i \leq k$, that there exists a set $X_i = \{e_1,\ldots,e_i\}$ with $\mr(X_i) = 1$ and $\rk(X_i) \geq r_i = r \left( 1 - \left( 1- \frac{1}{r} \right)^i \right)$. This trivially holds for $i=1$, let us assume this holds for $i-1$. Let $b = \{e_i^1,\ldots,e_i^r\}$ be a basis which does not intersect with $\{e_1,\ldots,e_{i-1}\}$. Note that such a basis exists, as $\{e_1,\ldots,e_{i-1}\}$ is not a transversal by definition of $k$. We have
$$
    \sum_{j=1}^r \rk(X_{i-1} \cup \{e_i^j\}) \geq \rk(X_{i-1} \cup b) + (r-1)\rk(X_{i-1}) \geq r + (r-1)r_{i-1},
$$
and hence $\rk(X_{i-1} \cup \{e_i^j\}) \geq 1 + (1 - 1/r)r_{i-1} = r_i$ for some $j$.
\end{proof}

\subsection{Closure}

We can generalise the idea of parallelism to the concept of closure, defined below. The point is that the closure operator depends on the rank function, not only on the family of bases.

\begin{definition} \label{def:closure}
For a rank function $\rk(X)$ of $(E,\mathcal{B})$, the closure associated to $\rk(X)$ of $X \subseteq E$ is given by
$$
    \cl_{\rk}(X) = \{e \in E : \rk(X \cup \{e\}) = \rk(X)\}.
$$
\end{definition}

If the rank function is known, we simply write $\cl(X)$.

\begin{proposition} \label{prop:closure}
The closure satisfies the following properties.
\begin{enumerate}

    \item \label{it:Xincl} For all $X \subseteq E$, $X \subseteq \cl(X)$.

    \item \label{it:cl_increasing} If $X \subseteq Y$, then $\cl(X) \subseteq \cl(Y)$.

    \item \label{it:cl(cl)} For all $X \subseteq E$, $\cl(\cl(X)) = \cl(X)$.

    \item \label{it:rk(cl)} For all $X \subseteq E$, $\rk(\cl(X)) = \rk(X)$.

    \item \label{it:cl=E} For all $X \subseteq E$, $\cl(X) = E$ if and only if $\rk(X) = r$.
\end{enumerate}
Note that the first three properties indicate that $\cl(X)$ does behave as a closure operator, while the last two depend on the rank function.
\end{proposition}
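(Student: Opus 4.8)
Each of the five claims follows by a short computation from the definition of closure together with the three polymatroid axioms for $\rk$ (monotonicity, submodularity, $0\le\rk(X)\le|X|$) and the fact that $\rk(X)\le r$ always. I would prove them essentially in the order listed, since the later ones lean on the earlier ones.

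\textbf{Claims 1 and 2.} For Claim~\ref{it:Xincl}, if $e\in X$ then $X\cup\{e\}=X$, so trivially $\rk(X\cup\{e\})=\rk(X)$ and $e\in\cl(X)$. For Claim~\ref{it:cl_increasing}, suppose $X\subseteq Y$ and $e\in\cl(X)$, so $\rk(X\cup\{e\})=\rk(X)$. Submodularity applied to $X\cup\{e\}$ and $Y$ gives $\rk(X\cup\{e\}\cup Y)+\rk((X\cup\{e\})\cap Y)\le\rk(X\cup\{e\})+\rk(Y)$. Since $X\subseteq (X\cup\{e\})\cap Y$, monotonicity gives $\rk((X\cup\{e\})\cap Y)\ge\rk(X)=\rk(X\cup\{e\})$, so the inequality forces $\rk(Y\cup\{e\})\le\rk(Y)$; the reverse inequality is monotonicity, so $e\in\cl(Y)$.

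\textbf{Claims 3 and 4.} I would prove Claim~\ref{it:rk(cl)} first, as it is the crux and Claim~\ref{it:cl(cl)} follows from it. The inequality $\rk(X)\le\rk(\cl(X))$ is monotonicity via Claim~\ref{it:Xincl}. For the reverse, enumerate $\cl(X)\setminus X=\{e_1,\dots,e_m\}$ and show by induction on $j$ that $\rk(X\cup\{e_1,\dots,e_j\})=\rk(X)$: given the claim for $j-1$, apply submodularity to $X\cup\{e_1,\dots,e_{j-1}\}$ and $X\cup\{e_j\}$, whose intersection contains $X$ and whose union is $X\cup\{e_1,\dots,e_j\}$; using the induction hypothesis and $e_j\in\cl(X)$ on the right, and monotonicity ($\ge\rk(X)$) on the intersection term, one gets $\rk(X\cup\{e_1,\dots,e_j\})\le\rk(X)$, and monotonicity gives equality. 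This is the step I expect to be the main obstacle: one must be a little careful that a single element in the closure of $X$ stays in the closure of the larger set $X\cup\{e_1,\dots,e_{j-1}\}$ — but this is exactly Claim~\ref{it:cl_increasing}, so the $e_j$ used in the induction is legitimately still ``free''. Now for Claim~\ref{it:cl(cl)}: $\cl(X)\subseteq\cl(\cl(X))$ by Claim~\ref{it:Xincl}; conversely if $e\in\cl(\cl(X))$ then $\rk(\cl(X)\cup\{e\})=\rk(\cl(X))=\rk(X)$ by Claim~\ref{it:rk(cl)}, and since $X\subseteq\cl(X)$ implies $\rk(X\cup\{e\})\le\rk(\cl(X)\cup\{e\})=\rk(X)$, monotonicity gives $\rk(X\cup\{e\})=\rk(X)$, i.e. $e\in\cl(X)$.

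\textbf{Claim 5.} If $\cl(X)=E$ then $\rk(X)=\rk(\cl(X))=\rk(E)=r$ by Claim~\ref{it:rk(cl)} and the fact that $E$ contains a basis (so $\rk(E)=r$). Conversely, suppose $\rk(X)=r$. For any $e\in E$, monotonicity gives $r=\rk(X)\le\rk(X\cup\{e\})\le r$, so $\rk(X\cup\{e\})=\rk(X)$ and $e\in\cl(X)$; hence $\cl(X)=E$. (For the assertion $\rk(E)=r$ one uses that $(E,\mathcal{B})$ has at least one basis $b$, so $\rk(b)=|b|=r$ by the fourth axiom in Definition~\ref{def:rank}, and monotonicity gives $\rk(E)\ge r$; the bound $\rk(E)\le r$ is built into the codomain $[0,r]$.)
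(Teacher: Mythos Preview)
Your proof is correct and follows essentially the same route as the paper's: submodularity plus monotonicity for Claim~2, establishing $\rk(\cl(X))=\rk(X)$ first and deducing idempotence from it, and the straightforward argument for Claim~5. The only cosmetic difference is in Claim~4, where the paper compresses your element-by-element induction into the single inequality $\sum_{i=1}^k \rk(X\cup\{e_i\})\ge \rk(\cl(X))+(k-1)\rk(X)$; your unrolled version is the same argument. (A minor remark: your aside invoking Claim~2 in the inductive step is unnecessary, since you only need $\rk(X\cup\{e_j\})=\rk(X)$, which is the definition of $e_j\in\cl(X)$; but it does no harm.)
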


\begin{proof}
Property \ref{it:Xincl} is trivial.

\ref{it:cl_increasing}. For any $e \in \cl(X)$, we have
$$
    \rk(Y) + \rk(X) = \rk(Y) + \rk(X \cup \{e\}) \geq \rk(Y \cup \{e\}) + \rk(X),
$$
and hence $\rk(Y \cup \{e\}) = \rk(Y)$ and $e \in \cl(Y)$. Therefore, $\cl(X) \subseteq \cl(Y)$.

\ref{it:rk(cl)}. Denote the elements of $\cl(X) \backslash X$ as $e_1,\ldots,e_k$. We have
$$
    k \rk(X) = \sum_{i=1}^k \rk(X \cup \{e_i\}) \geq \rk(\cl(X)) + (k-1)\rk(X),
$$
and hence $\rk(\cl(X)) = \rk(X)$.

\ref{it:cl(cl)}. Let $e \in \cl(\cl(X))$, then by Property \ref{it:rk(cl)}
$$
     \rk(X) = \rk(\cl(X)) = \rk(\cl(X) \cup \{e\}) \geq \rk(X \cup \{e\}),
$$
and hence $e \in \cl(X)$.

\ref{it:cl=E}. If $\rk(X) = E$, then by Property \ref{it:rk(cl)} $r = \rk(\cl(X)) = \rk(X)$. Conversely, if $\rk(X) = r$, then for any $e \in E$, $r = \rk(X) = \rk(X \cup \{e\})$ and $e \in \cl(X)$.
\end{proof}

It is worth noticing that all operators satisfying Properties \ref{it:Xincl}, \ref{it:cl_increasing}, and \ref{it:cl(cl)} do not necessarily correspond to a rank function. An example is given below; in order to simplify notations, we shall remove brackets and commas when describing subsets of elements.

\begin{example}\label{ex:cl_not_rk}
Let $E = \{0,\ldots,5\}$ and $\mathcal{B}_{3,2} = \{012,345\}$. We define the closure operator $c$ as $c(X) = X$ for all $X \subseteq E$ except:
\begin{eqnarray*}
    c(X) &=& \{0\} \cup X \, \mbox{if} \, X \in \{34,134,234,35,135,235\}\\
    c(X) &=& E \, \mbox{if} \, X \mbox{contains a basis or}\, X \in \{0145,0245,1234,1235,1245\}
\end{eqnarray*}
It can be shown that $c$ satisfies Properties \ref{it:Xincl}, \ref{it:cl_increasing}, and \ref{it:cl(cl)}. Furthermore, if $|X| = 3$, then $c(X) = E$ if and only if $X \in \mathcal{B}$. However, suppose $r$ is a rank function generating the closure operator $c$, then
$$
    4 \geq r(34) + r(35) = r(034) + r(035) \geq r(03) + r(0345) > 4,
$$
for $3 \notin c(0)$ and hence $r(03) > r(0) = 1$.
\end{example}

We refer to a set equal to its closure as a {\em flat}.

\begin{proposition}
We have the following properties.
\begin{enumerate}
    \item For any $X \subseteq E$, $\cl(X)$ is equal to the intersection of all flats containing $X$.

    \item The family of flats is closed under intersection.

    \item For any $X,Y \subseteq E$, $\cl(X \cup Y) = \cl(\cl(X) \cup \cl(Y))$.
\end{enumerate}
\end{proposition}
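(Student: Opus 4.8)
The plan is to derive all three statements purely from the three abstract closure properties established in Proposition \ref{prop:closure}, namely extensivity ($X \subseteq \cl(X)$), monotonicity ($X \subseteq Y \Rightarrow \cl(X) \subseteq \cl(Y)$), and idempotence ($\cl(\cl(X)) = \cl(X)$); the rank function itself will play no further role. The key preliminary remark, which I would state first, is that $\cl(X)$ is itself a flat containing $X$: it contains $X$ by extensivity, and it equals its own closure by idempotence.

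For the first item, I would argue by double inclusion. Since $\cl(X)$ is a flat containing $X$, the intersection of all flats containing $X$ is contained in $\cl(X)$. Conversely, if $F$ is any flat containing $X$, then monotonicity gives $\cl(X) \subseteq \cl(F) = F$, so $\cl(X)$ lies inside every flat containing $X$, hence inside their intersection; equality follows. The second item is then immediate: given any family $\{F_j\}$ of flats with $F = \bigcap_j F_j$, extensivity gives $F \subseteq \cl(F)$, while for each $j$ monotonicity applied to $F \subseteq F_j$ gives $\cl(F) \subseteq \cl(F_j) = F_j$, so $\cl(F) \subseteq \bigcap_j F_j = F$. (Alternatively it drops out of item~1 directly.)

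For the third item I would again use double inclusion. From $X \cup Y \subseteq \cl(X) \cup \cl(Y)$, monotonicity gives $\cl(X \cup Y) \subseteq \cl(\cl(X) \cup \cl(Y))$. In the other direction, $X \subseteq X \cup Y$ and $Y \subseteq X \cup Y$ give $\cl(X), \cl(Y) \subseteq \cl(X \cup Y)$ by monotonicity, hence $\cl(X) \cup \cl(Y) \subseteq \cl(X \cup Y)$; applying $\cl$ once more and using monotonicity together with idempotence yields $\cl(\cl(X) \cup \cl(Y)) \subseteq \cl(\cl(X \cup Y)) = \cl(X \cup Y)$.

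I do not expect any genuine obstacle here: the whole proposition is a formal exercise in manipulating an abstract closure operator, and the only mild point to keep in mind is to route every argument through monotonicity and idempotence (in particular through the observation that $\cl(X)$ is a flat), rather than reaching back to the rank function, which would only complicate matters.
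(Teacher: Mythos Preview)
Your proof is correct, and for items~2 and~3 it is essentially identical to the paper's argument. The one genuine difference is in item~1: the paper does not invoke the monotonicity property of $\cl$ from Proposition~\ref{prop:closure} but instead goes back to the rank function, showing directly via submodularity that if $e\in\cl(X)$ and $F\supseteq X$ is a flat then
\[
\rk(F)+\rk(X)=\rk(F)+\rk(X\cup\{e\})\ge\rk(F\cup\{e\})+\rk(X),
\]
hence $e\in\cl(F)=F$. This is of course just a re-derivation of the special case of monotonicity needed here, so your route---using the already-established closure axioms and never touching $\rk$ again---is the cleaner and more conceptual one, and makes explicit that the proposition is a purely formal fact about closure operators. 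The paper's version has the minor advantage of being self-contained at that spot, but at the cost of repeating an argument already made.
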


\begin{proof}
1. Let $F$ be a flat containing $X$ and let $e \in \cl(X)$, then
$$
    \rk(F) + \rk(X) = \rk(F) + \rk(X \cup \{e\}) \geq \rk(F \cup \{e\}) + \rk(X),
$$
and hence $e \in \cl(F) = F$.

2. For any flats $F,G$ we have $F \cap G \subseteq \cl(F \cap G)$. Moreover, we have
$$
    \cl(F \cap G) \subseteq \cl(F) = F,
$$
and similarly $\cl(F \cap G) \subseteq G$ and hence $\cl(F \cap G) \subseteq F \cap G$. Therefore, $F \cap G = \cl(F \cap G)$ is a flat.

3. Since $X \cup Y \subseteq \cl(X) \cup \cl(Y)$, we have $\cl(X \cup Y) \subseteq \cl(\cl(X) \cup \cl(Y))$. On the other hand, $\cl(X)$ and $\cl(Y)$ are both subsets of $\cl(X \cup Y)$ and hence $\cl(\cl(X) \cup \cl(Y)) \subseteq \cl(\cl(X \cup Y)) = \cl(X \cup Y)$.
\end{proof}

We can then define the {\em lattice of flats}, which is not necessarily semimodular (as the height function is not equal to the rank) but where the rank function is a semivaluation. Remark that although
$$
    \cl(X \cup Y) = \cl(\cl(X) \cup \cl(Y)),
$$
we do not necessarily have equality in the following:
$$
    \cl(X \cap Y) \subseteq \cl(X) \cap \cl(Y) = \cl(\cl(X) \cap \cl(Y)).
$$

We finish this section by noticing that the closure associated to the rank function of a given combinatorial representation satisfies
\begin{eqnarray*}
    \cl_{r_f}(X) &=& \{e \in E: r_f(X \cup \{e\}) = r_f(X)\}\\
    &=& \{e \in E : \bar{f}_{X \cup \{e\}} = \bar{f}_X\}\\
    &=& \{e \in E : \bar{f}_X \,\mbox{refines}\, \bar{f}_e\}.
\end{eqnarray*}


\section{Representation by matrices with $2$ rows} \label{sec:2_rows}

In this section, we are interested in families which are ``nearly'' linearly representable matroids: those for which there is a representation with matrices with only two rows. By Proposition \ref{prop:cartesian}, if $(E,\mathcal{B})$ is the intersection of $d$ linear matroids, then it has a representation by matrices of size $d$. Proposition \ref{prop:B_23} gives a counterexample of the converse for $d=2$.

\begin{proposition} \label{prop:B_23}
The family $\mathcal{B}_{2,3}$ is not the intersection of two matroids, yet it has a representation by matrices with $2$ rows.
\end{proposition}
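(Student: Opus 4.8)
The statement has two distinct halves, which I would treat separately. The first half — that $\mathcal{B}_{2,3}$ is not the intersection of two matroids on the ground set $E = \{0,1,\ldots,5\}$ — is a finite combinatorial verification. Recall $\mathcal{B}_{2,3} = \{\{0,1\},\{2,3\},\{4,5\}\}$, a rank-$2$ family. The plan is to argue by contradiction: suppose $\mathcal{B}_{2,3} = \mathcal{B}(M_1) \cap \mathcal{B}(M_2)$ where $M_1, M_2$ are matroids of rank $2$ on $E$ (they must have rank $2$ since the common bases have size $2$, and each $M_i$ must contain all three pairs as bases). In a rank-$2$ matroid, the bases are exactly the pairs of elements lying in distinct parallel classes (after deleting loops); equivalently $M_i$ is determined by a partition of its non-loop elements into parallel classes, and $\{x,y\}$ is a basis iff $x,y$ are non-loops in different classes. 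So $\mathcal{B}(M_1) \cap \mathcal{B}(M_2)$ consists of pairs $\{x,y\}$ that are separated by \emph{both} partitions. I would then show that no pair of partitions of (a subset of) $E$ separates exactly the three pairs $\{0,1\},\{2,3\},\{4,5\}$ and no others: since $\{0,1\}$ is a basis, $0$ and $1$ are non-loops and lie in different classes in each $M_i$; likewise for $\{2,3\}$ and $\{4,5\}$. Now consider where $2$ sits relative to $\{0,1\}$ in $M_1$: it is in the class of $0$, or of $1$, or in a third class. A short case analysis (using that $\{0,2\},\{1,2\}$ must \emph{fail} to be bases, i.e. be non-bases in $M_1$ or in $M_2$, and similarly for all six "cross" pairs) forces a contradiction — roughly, each $M_i$ can "kill" a cross-pair only by putting the two elements in the same parallel class, and one checks that two rank-$2$ matroids cannot kill all of $\{0,2\},\{0,3\},\{1,2\},\{1,3\},\{0,4\},\ldots$ without also killing one of $\{0,1\},\{2,3\},\{4,5\}$. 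This is the step I expect to be the main obstacle: it is not deep, but it requires either a clean structural observation about rank-$2$ matroids or a disciplined enumeration, and the write-up must be organised so the case analysis does not balloon.

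For the second half — exhibiting an explicit representation of $\mathcal{B}_{2,3}$ by $2 \times 1$ matrix-linear functions (i.e. functions $X^2 \to X$ that are linear over a module structure, acting on an alphabet $X = V^2$ for a suitable vector space $V$, equivalently $2\times 2$ block structure so the "matrices" in the title are $2$-rowed) — I would not reinvent the wheel: Proposition \ref{prop:Brn} already gives a combinatorial representation of $\mathcal{B}_{r,k}$, in particular $\mathcal{B}_{2,3}$, over $\mathbb{Z}_3$, via $f_{2m+s}(x) = x_s + m\chi_D(x)$ where $D$ is the diagonal of $\mathbb{Z}_3^2$. The functions there are explicitly \emph{not} matrix-linear, so the task is to produce a genuinely different, matrix-linear one. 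The plan is to take an alphabet of the form $A = \mathbb{F}_q^2$ (so functions $A^2 \to A$ given by $2\times 2$ matrices over $\mathbb{F}_q$ applied coordinatewise are exactly "matrix-linear functions with $2$ rows") and search for six matrices $F_0,\ldots,F_5 \in \mathbb{F}_q^{2\times 2}$ — or rather, six pairs of matrices $(F_i, F_i')$ assembled into the block map $(u,v)\mapsto F_i u + F_i' v$ on $A^2 = (\mathbb{F}_q^2)^2$ — such that $\begin{pmatrix} F_i & F_i' \\ F_j & F_j' \end{pmatrix}$ is invertible over $\mathbb{F}_q$ exactly when $\{i,j\} \in \mathcal{B}_{2,3}$. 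Since $\mathcal{B}_{2,3}$ is the intersection of the three linear matroids "$U_{2,6}$ minus one pair" and those have linear representations (Theorem \ref{th:all}'s construction), by Proposition \ref{prop:cartesian} we already get a representation by matrices of size $3$; the content of this half is that size $2$ suffices, so I need to beat the generic construction by one, which I would do by directly writing down $2\times 2$ blocks and checking the nine relevant $4\times 4$ determinants (three should be nonzero, six should vanish) — a finite check over a small field such as $\mathbb{F}_2$ or $\mathbb{F}_3$.

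Concretely, I would try to realise the two "coordinates" of $A = \mathbb{F}_q^2$ as carrying two linear matroids whose bases' intersection is $\mathcal{B}_{2,3}$ together with the cross-pairs that happen to stay independent, and then break those remaining cross-pairs using the interaction between the two coordinates — i.e. pick the off-diagonal blocks $F_i'$ so that for a cross-pair $\{i,j\}$ the block matrix drops rank even though each coordinate alone keeps it full rank. The fact that $\mathcal{B}_{2,3}$ is \emph{not} an intersection of two matroids (the first half) is exactly what tells us a naive "product of two matroids" attempt cannot work, so the off-diagonal coupling is essential; this is the conceptual crux linking the two halves. Once a candidate sextuple of $2\times 2$ blocks is in hand, verification is the routine determinant check described above, and I would simply present the resulting functions in a display analogous to the table in Example \ref{ex:B_22}, then assert (with the determinants tabulated) that they form the required representation.
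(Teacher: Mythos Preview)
Your approach to the first half matches the paper's: both argue by contradiction via the structure of rank-$2$ matroids. You phrase it through parallel classes (a rank-$2$ matroid is a partition of its non-loops), while the paper uses the exchange axiom directly; these are equivalent here, and the resulting case analysis is the same. The paper's streamlining is to observe, via exchange, that for each element $e$ and basis $\{u,v\}$ not containing it, $e$ must be adjacent in $\mathcal{B}_1$ to exactly one of $u,v$ and in $\mathcal{B}_2$ to the other; taking $e=0$ with the bases $\{2,3\}$ and $\{4,5\}$ then forces $0,2,4$ parallel in $\mathcal{B}_2$, and the contradiction with exchange in $\mathcal{B}_1$ drops out in a few lines.

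For the second half your framework is correct, but there is a minor counting slip and a missing idea. The slip: there are $\binom{6}{2}=15$ pairs to check (three bases, twelve non-bases), not nine. More substantively, your plan of starting from two ``diagonal'' matroids and then adding off-diagonal coupling to kill leftover cross-pairs is vague and is not how the paper proceeds; since you have just shown that no product of two matroids can work, it is unclear this heuristic leads anywhere without degenerating into an unstructured search. The paper's construction is direct and needs no search: identify the six elements of $E$ with the six $2$-element subsets of $\{1,2,3,4\}$, sending the three bases of $\mathcal{B}_{2,3}$ to the three complementary pairs, and let $f_i:\gf(p)^4\to\gf(p)^2$ be projection onto the corresponding two coordinates. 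Then $(f_i,f_j)$ is a bijection iff the two coordinate pairs are disjoint, i.e.\ exactly when $\{i,j\}\in\mathcal{B}_{2,3}$. This works over every field, and the $4\times4$ matrices to inspect are either permutation matrices (for bases) or have a repeated row (for non-bases). Your brute-force search over a small field would eventually hit this or something equivalent, so there is no fatal gap, but the coordinate-projection trick is the actual content of the construction and is worth knowing.
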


\begin{proof}
We first show that $\mathcal{B} := \mathcal{B}_{2,3}$ is not the intersection of two matroids. Recall that $E = \{0,\ldots,5\}$ and $\mathcal{B} = \{\{0,1\},\{2,3\},\{4,5\}\}$. Suppose on the contrary that $\mathcal{B} = \mathcal{B}_1 \cap \mathcal{B}_2$, where $\mathcal{B}_i$ is a matroid on $E$ for $i \in \{1,2\}$. Since $r=2$, we view these as graphs, and we say that two vertices are adjacent if they form a basis. By the exchange axiom, for each vertex $e$ and each basis $b \in \mathcal{B}$ not containing $e$, $e$ is  adjacent in $\mathcal{B}_1$ and $\mathcal{B}_2$ to one or two vertices of $b$. Since such an edge does not appear in $\mathcal{B}_1 \cap \mathcal{B}_2$, we conclude that $e$ is adjacent in $\mathcal{B}_1$ to exactly one vertex of $b$ and is adjacent in $\mathcal{B}_2$ to the other vertex of $b$. Without loss of generality, let $0$ be adjacent to $2$ and $4$ in $\mathcal{B}_1$: $\{0,2\}, \{0,4\} \in \mathcal{B}_1$, then $\{1,2\}, \{1,4\} \notin \mathcal{B}_1$ by applying the conclusion above to $b = \{0,1\}$ and $e = 2$ and $e=4$, respectively. However $\{0,2\}, \{0,4\} \notin \mathcal{B}_2$ show that $0,2,4$ are parallel in $\mathcal{B}_2$ and hence $\{2,4\} \notin \mathcal{B}_2$. Therefore, $\{2,4\} \in \mathcal{B}_1$ while $\{1,2\}, \{1,4\} \notin \mathcal{B}_1$, and the exchange axiom is violated: this is the desired contradiction.

We now give a representation of $(E,\mathcal{B})$ using matrices with $2$ rows. Let $p$ be any prime power, and consider $A = \mathrm{GF}(p)^2$, then we can express $(x,y) \in A^2$ as $(x_1,x_2,y_1,y_2) \in \gf(p)^4$. The functions representing $(E,\mathcal{B})$ can be expressed as $f_i(x_1,x_2,y_1,y_2) = {\bf F}_i (x_1,x_2,y_1,y_2)^T$, where ${\bf F}_i \in \gf(p)^{2 \times 4}$ are given by
\begin{eqnarray*}
    {\bf F}_0 &=& \begin{pmatrix}
    1 & 0 & 0 & 0\\
    0 & 1 & 0 & 0
    \end{pmatrix},\\
    {\bf F}_1 &=& \begin{pmatrix}
    0 & 0 & 1 & 0\\
    0 & 0 & 0 & 1
    \end{pmatrix},\\
    {\bf F}_2 &=& \begin{pmatrix}
    1 & 0 & 0 & 0\\
    0 & 0 & 1 & 0
    \end{pmatrix},\\
    {\bf F}_3 &=& \begin{pmatrix}
    0 & 1 & 0 & 0\\
    0 & 0 & 0 & 1
    \end{pmatrix},\\
    {\bf F}_4 &=& \begin{pmatrix}
    1 & 0 & 0 & 0\\
    0 & 0 & 0 & 1
    \end{pmatrix},\\
    {\bf F}_5 &=& \begin{pmatrix}
    0 & 1 & 0 & 0\\
    0 & 0 & 1 & 0
    \end{pmatrix}.
\end{eqnarray*}
\end{proof}

We now prove that there exist families of bases which do not have a representation using matrices with $2$ rows. The proof is based on the Ingleton inequality~\cite{Ing71}: for any four subsets $X_1,\ldots,X_4$ of a family representable using matrices, we have
\begin{eqnarray}
    \nonumber
    &&\rk(X_1) + \rk(X_2) + \rk(X_1 \cup X_2 \cup X_3) + \rk(X_1 \cup X_2 \cup X_4) + \rk(X_3 \cup X_4)\\
    \label{eq:ingleton}
    &&\leq \rk(X_1 \cup X_2) + \rk(X_1 \cup X_3) + \rk(X_1 \cup X_4) + \rk(X_2 \cup X_3) + \rk(X_2 \cup X_4).
\end{eqnarray}

\begin{proposition} \label{prop:not_2_rows}
Let $E = \{1,\ldots,7\}$ and $\mathcal{B} = \{16, 27, 34, 45, 53\}$ as represented in Figure \ref{fig:graph}. Then $(E,\mathcal{B})$ does not have a representation using matrices with $2$ rows.
\end{proposition}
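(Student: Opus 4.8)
The plan is to argue by contradiction. Suppose $(E,\mathcal{B})$ has a representation $f=(f_1,\dots,f_7)$ by matrices with two rows over some finite field $V$; since $r=2$ the alphabet is $V^2$, and each $f_X$ is a $V$-linear map from a $4$-dimensional space to a $2|X|$-dimensional one. By Proposition~\ref{prop:r_f}, $r_f$ is a rank function for $(E,\mathcal{B})$: it is monotone and submodular, $r_f(\{i,j\})=2$ exactly when $\{i,j\}\in\mathcal{B}$, $r_f(X)=2$ whenever $X$ contains a basis (as $f_X$ then refines a bijection), and $r_f(\{i\})=1$ for all $i$ since no element of this graph is a loop. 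Because the matrices have only two rows, the image of each $f_X$ is a $V$-subspace of a $2r$-dimensional space, so $r_f$ is $\frac{1}{2}$-integer-valued; in particular every pair outside $\mathcal{B}$ has rank in $\{1,3/2\}$. Finally $r_f$ satisfies the Ingleton inequality~(\ref{eq:ingleton}).

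The first step is to pin down $r_f(\{1,2\})$. I would apply~(\ref{eq:ingleton}) with $X_1=\{1\}$, $X_2=\{6\}$, $X_3=\{2\}$, $X_4=\{7\}$. The left-hand side is $r_f(\{1\})+r_f(\{6\})+r_f(\{1,2,6\})+r_f(\{1,6,7\})+r_f(\{2,7\})=1+1+2+2+2=8$, using that $\{1,2,6\}$ and $\{1,6,7\}$ contain the basis $\{1,6\}$ and that $\{2,7\}\in\mathcal{B}$. The right-hand side is $r_f(\{1,6\})+r_f(\{1,2\})+r_f(\{1,7\})+r_f(\{2,6\})+r_f(\{6,7\})$, in which $\{1,6\}$ is a basis and the four remaining pairs lie outside $\mathcal{B}$, hence have rank at most $3/2$. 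Thus $8\le 2+4\cdot\frac{3}{2}$, forcing each of those four pairs to have rank exactly $3/2$; in particular $r_f(\{1,2\})=\frac{3}{2}$.

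The second step exploits the triangle $\{3,4,5\}$. For each two-element subset $\{c,c'\}$ of $\{3,4,5\}$, apply~(\ref{eq:ingleton}) with $X_1=\{1\}$, $X_2=\{2\}$, $X_3=\{c\}$, $X_4=\{c'\}$. Since $\{c,c'\}\in\mathcal{B}$, the left-hand side is $1+1+r_f(\{1,2,c\})+r_f(\{1,2,c'\})+2$, while on the right the five pairs $\{1,2\},\{1,c\},\{1,c'\},\{2,c\},\{2,c'\}$ all lie outside $\mathcal{B}$ and so contribute at most $5\cdot\frac{3}{2}$; hence $r_f(\{1,2,c\})+r_f(\{1,2,c'\})\le \frac{7}{2}$. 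Conversely, submodularity applied to $\{1,2,c\}$ and $\{1,2,c'\}$, together with $r_f(\{1,2,c,c'\})=2$ (it contains the basis $\{c,c'\}$) and $r_f(\{1,2\})=\frac{3}{2}$, gives $r_f(\{1,2,c\})+r_f(\{1,2,c'\})\ge \frac{7}{2}$. So equality holds for all three pairs $\{c,c'\}\subseteq\{3,4,5\}$; adding the three equations forces $r_f(\{1,2,3\})=r_f(\{1,2,4\})=r_f(\{1,2,5\})=\frac{7}{4}$, contradicting the $\frac{1}{2}$-integrality of $r_f$. This contradiction proves the proposition.

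The crux, and the place where having exactly two rows matters, is the tension between Ingleton and half-integrality: with $d\ge 3$ rows the same computations yield only inequalities with enough integer slack to be satisfiable, consistent with the theorem being specific to $2$ rows. The structure of the graph is used just as essentially — the pendant edges $\{1,6\}$ and $\{2,7\}$ are what force $r_f(\{1,2\})=\frac{3}{2}$, while the triangle on $\{3,4,5\}$ supplies three distinct bases sitting over the single pair $\{1,2\}$, so that averaging produces the forbidden value $\frac{7}{4}$; this is precisely the feature absent from $\mathcal{B}_{2,3}$, which by Proposition~\ref{prop:B_23} does admit a two-row representation. The only real care needed is bookkeeping: checking in each use of~(\ref{eq:ingleton}) that every term forced to be large genuinely contains a basis and every term forced to be small genuinely avoids $\mathcal{B}$, which works out because of the chosen incidences between the pendant edges and the triangle.
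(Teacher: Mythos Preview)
Your argument is correct and follows the same strategy as the paper: a two-row matrix representation gives a half-integer-valued rank function satisfying Ingleton, the pendant edges force $r_f(\{1,2\})=\tfrac32$, and the triangle on $\{3,4,5\}$ then produces the contradiction. The paper's execution differs only tactically: it gets $r_f(\{1,2\})=\tfrac32$ straight from submodularity (via $r_f(\{1,2\})+r_f(\{1,7\})\ge r_f(\{1\})+r_f(\{1,2,7\})=3$) rather than from an Ingleton instance, and then observes that submodularity forces at least two of the values $r_f(\{1,2,c\})$, $c\in\{3,4,5\}$, to equal $2$, so that a \emph{single} application of Ingleton with $X_i=\{i\}$ already gives $8>7.5$; you instead pin all three pairwise sums to $\tfrac72$ and solve to obtain the non-half-integer $\tfrac74$. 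Both routes exploit exactly the same structural tension.
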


\begin{figure}
\begin{center}
\setlength{\unitlength}{1cm}
\begin{picture}(7,3)
%
\put(2,2){\circle{1}}
\put(1.9,1.85){$1$}
\put(2,0){\circle{1}}
\put(1.9,-0.15){$2$}
\put(4,2){\circle{1}}
\put(3.9,1.85){$3$}
\put(4,0){\circle{1}}
\put(3.9,-0.15){$4$}
\put(6,1){\circle{1}}
\put(5.9,0.85){$5$}
\put(0,2){\circle{1}}
\put(-0.1,1.85){$6$}
\put(0,0){\circle{1}}
\put(-0.1,-0.15){$7$}
%
\put(0.5,2){\line(1,0){1}}
\put(0.5,0){\line(1,0){1}}
\put(4,0.5){\line(0,1){1}}
\put(4.448,0.224){\line(2,1){1.116}}
\put(4.448,1.776){\line(2,-1){1.116}}
\end{picture}
\end{center}
\caption{A graph without representation by matrices with $2$ rows.} \label{fig:graph}
\end{figure}
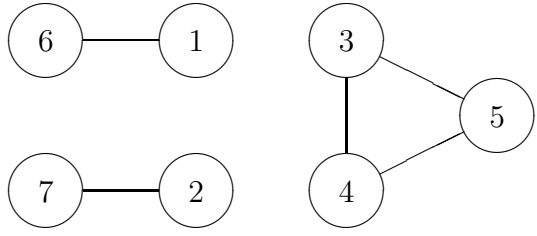

\begin{proof}
Any rank function $\rk(X)$ generated by a representation using matrices with $2$ rows takes half-integer values. However, let $\rk(X)$ be a rank function with half-integer values and let us prove that it violates the Ingleton inequality in (\ref{eq:ingleton}). First, we have $\rk(12), \rk(17) \leq 1.5$ as neither are bases and
$$
    \rk(12) + \rk(17) \geq \rk(1) + \rk(127) = 3,
$$
which implies that $\rk(12) = \rk(17) = 1.5$.

Second, submodularity implies
$$
    \rk(123) + \rk(124) \geq \rk(12) + \rk(1234) = 3.5,
$$
and hence $\rk(123) = 2$ or $\rk(124) = 2$; without loss, say $\rk(123) = 2$. By symmetry, we also obtain that $\rk(124) = 2$ or $\rk(125) = 2$; say $\rk(124) = 2$.

Denoting $X_i = \{i\}$ for $i = 1,\ldots,4$, the Ingleton inequality is violated:
\begin{eqnarray*}
    \rk(1) + \rk(2) + \rk(123) + \rk(124) + \rk(34) = 8 &>\\
    \rk(12) + \rk(13) + \rk(14) + \rk(23) + \rk(24) = 7.5.
\end{eqnarray*}
\end{proof}

By the same technique, one can also show that the following family of bases is not representable using matrices with $2$ rows:
$$
    E = \{0,1,\ldots,8\}, \mathcal{B} = \{012,034,056,078,057,135,137\}.
$$
The Ingleton inequality is violated by
$$
    X_1 = 01, X_2 = 03, X_3 = 05, X_4 = 07.
$$

However, Proposition \ref{prop:Ingleton_3} shows that the Ingleton inequality cannot rule out representations by matrices with $3$ rows.

\begin{proposition} \label{prop:Ingleton_3}
The rank function defined by
\begin{equation}
    \rk(X) = \begin{cases}
    |X| & \mathrm{if}\, |X| \leq r-1 \,\mathrm{or}\, X \in \mathcal{B}\\
    r- \frac{1}{3} & \mathrm{if}\, |X| = r, X \notin \mathcal{B}\\
    r & \mathrm{if}\, |X| \geq r+1
    \end{cases}
\end{equation}
satisfies the Ingleton inequality.
\end{proposition}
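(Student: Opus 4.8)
The plan is to reduce the Ingleton inequality for $\rk$ to the Ingleton inequality for the uniform matroid $U_{r,n}$ plus a finite combinatorial check. Write $X_{i_1\cdots i_k}:=X_{i_1}\cup\cdots\cup X_{i_k}$ and put $h(S):=r-\rk(S)$, so that $h(S)=r-|S|$ if $|S|\le r-1$, $h(S)=\tfrac13$ if $|S|=r$ and $S\notin\mathcal{B}$, and $h(S)=0$ if $S\in\mathcal{B}$ or $|S|\ge r+1$. Since each side of (\ref{eq:ingleton}) has five terms, substituting $\rk=r-h$ cancels the constant $5r$ and turns (\ref{eq:ingleton}) into the equivalent inequality
\[
    h(X_{12})+h(X_{13})+h(X_{14})+h(X_{23})+h(X_{24})\ \le\ h(X_1)+h(X_2)+h(X_{123})+h(X_{124})+h(X_{34}).
\]
Set $\mathcal{R}=\{X_{12},X_{13},X_{14},X_{23},X_{24}\}$ and $\mathcal{L}=\{X_1,X_2,X_{123},X_{124},X_{34}\}$, and decompose $h=g+d$, where $g(S)=\max\{0,r-|S|\}$ and $d(S)=\tfrac13$ if $|S|=r$ and $S\notin\mathcal{B}$, while $d(S)=0$ otherwise.

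For the $g$-part, note that $r-g(S)=\min\{r,|S|\}=\Mr(S)$ is the rank function of a linear representation of $U_{r,n}$ in general position, which exists over every sufficiently large field (equivalently, an $(n,r,n-r+1)$ MDS code). Hence $\Mr$ satisfies the Ingleton inequality \cite{Ing71}, i.e.\ $\sum_{S\in\mathcal{R}}g(S)\le\sum_{S\in\mathcal{L}}g(S)$. Put $\Delta:=\sum_{S\in\mathcal{L}}g(S)-\sum_{S\in\mathcal{R}}g(S)$, a non-negative integer. The displayed inequality is then equivalent to $\sum_{S\in\mathcal{R}}d(S)-\sum_{S\in\mathcal{L}}d(S)\le\Delta$; since $0\le d\le\tfrac13$ and $|\mathcal{R}|=5$, the left-hand side is at most $\tfrac53<2$, so the inequality is automatic when $\Delta\ge2$, and it remains to treat $\Delta\in\{0,1\}$.

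This last step is where the real work lies, and I expect it to be the main obstacle. Call an $r$-subset of $E$ not lying in $\mathcal{B}$ \emph{bad}; then $d$ is supported precisely on the bad sets, so it suffices to show that $\mathcal{R}$ contains at most $3\Delta$ more bad sets than $\mathcal{L}$. The key fact is that badness propagates to larger sets of the same size: if $B$ is bad with $B\subseteq S$ and $|S|\le r$, then $S=B$; also $g(S)=0$ as soon as $S$ contains an $r$-subset. Consequently, if one of the pairs $\{1,2\},\{1,3\},\{2,3\}$ is \emph{bad} (meaning $X_i\cup X_j$ is a bad set), then $g(X_{123})=0$ and $X_{123}$ is itself bad unless $|X_{123}|>r$; the same holds for $X_{124}$ with the pairs inside $\{1,2,4\}$, and $X_1$ (respectively $X_2$) is bad whenever it has $r$ elements and lies in some bad pair-union. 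I would then run through the possibilities according to which of the five sets in $\mathcal{R}$ are bad: when many of them are, the monotonicity inequalities $g(X_1),g(X_2)\ge g(X_{12})$ together with $\Delta\le1$ force the bad pair-unions to collapse onto a single bad set $B$ containing all the $X_i$ that occur, whence $X_{123}=X_{124}=B$ and $X_3\cup X_4\subseteq B$, making at least four members of $\mathcal{L}$ bad; when only one or two pairs are bad, forcing $\mathcal{L}$ to have too few bad sets requires $X_{123}$, $X_{124}$ or $X_3\cup X_4$ to exceed $r$ elements, which in turn raises $\Delta$ enough to absorb the difference. The resulting bookkeeping, best arranged — as in the proof of Theorem~\ref{th:half_integers} — around the sizes of $X_1$, $X_2$, $X_3\cup X_4$ and the pattern of bad pairs in $\mathcal{R}$, is the delicate part of the argument.
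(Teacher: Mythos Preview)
Your reformulation is correct and elegant: writing $\rk=r-h$ with $h=g+d$, where $r-g=\Mr$ is the rank of $U_{r,n}$, and using that $U_{r,n}$ is linearly representable (hence Ingleton holds for $g$) to define an integer slack $\Delta\ge0$, is a genuinely different organising principle from the paper's. The reduction to $\Delta\in\{0,1\}$ via the bound $\sum_{\mathcal R}d-\sum_{\mathcal L}d\le\tfrac53$ is also sound.

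However, the proof is not finished: everything from ``This last step is where the real work lies'' onward is only a sketch, and the sketch is looser than it appears. For instance, you write that when many pair-unions in $\mathcal R$ are bad, ``at least four members of $\mathcal L$'' become bad; but $X_1,X_2\in\mathcal L$ are bad only if they themselves have exactly $r$ elements, which is not forced merely by $X_1,X_2\subseteq B$ with $|B|=r$. Likewise ``$X_3\cup X_4\subseteq B$'' gives $|X_{34}|\le r$, not $|X_{34}|=r$. So the pairing of bad sets across $\mathcal R$ and $\mathcal L$ that you outline does not come for free, and the $\Delta\in\{0,1\}$ analysis you promise is precisely the non-trivial content of the proposition. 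You would still need to carry out a case split of comparable size to the paper's.

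The paper takes a more direct route: it works with $\rk$ itself and first disposes of all cases where some term on the right of (\ref{eq:ingleton}) coincides with $\rk(X_{123})$ or $\rk(X_{124})$ (Cases~I and~II), using only submodularity. This simultaneously eliminates every situation where any term on the right equals $r$. What remains is either the pure cardinality case (Case~III), or the case where $k$ of the right-hand terms have rank $r-\tfrac13$ (Case~IV); there the paper compares both sides to the cardinality Ingleton inequality and tracks how many of $X_{123},X_{124}$ must have size $\ge r+1$. Your decomposition $h=g+d$ is morally the same comparison, but the paper's Cases~I--II are the key device that makes the residual bookkeeping short; without an analogue of them, your $\Delta\in\{0,1\}$ analysis will be at least as long.
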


\begin{proof}
The fact that this is a rank function was proved in Theorem \ref{th:half_integers}. We denote the left and right hand sides of the Ingleton inequality in (\ref{eq:ingleton}) as $L$ and $R$, respectively. The proof goes by considering cases based on the terms in $R$. It is important to note the symmetric roles of $X_1$ and $X_2$ on one hand and $X_3$ and $X_4$ on the other hand in Ingleton's inequality. In order to illustrate our calculations, square brackets indicate where we use the submodular inequality. 

\paragraph{Case I:} $\rk(X_1 \cup X_2) = \rk(X_1 \cup X_2 \cup X_3)$. We then have
\begin{eqnarray*}
    R &=& \rk(X_1 \cup X_2 \cup X_3) + [\rk(X_1 \cup X_3) + \rk(X_1 \cup X_4)] + [\rk(X_2 \cup X_3) + \rk(X_2 \cup X_4)]\\
    &\geq& \rk(X_1 \cup X_2 \cup X_3) + \rk(X_1) + \rk(X_2) + [\rk(X_1 \cup X_3\cup X_4) + \rk(X_2 \cup X_3 \cup X_4)]\\
    &\geq& L.
\end{eqnarray*}
By symmetry, we also rule out the case where $\rk(X_1 \cup X_2) = \rk(X_1 \cup X_2 \cup X_4)$.

\paragraph{Case II:} $\rk(X_1 \cup X_3) = \rk(X_1 \cup X_2 \cup X_3)$. We then have
\begin{eqnarray*}
    R &=& \rk(X_1 \cup X_2 \cup X_3) + [\rk(X_1 \cup X_2) + \rk(X_1 \cup X_4)] + [\rk(X_2 \cup X_3) + \rk(X_2 \cup X_4)]\\
    &\geq& L.
\end{eqnarray*}
By symmetry, we also rule out the cases where $\rk(X_1 \cup X_4) = \rk(X_1 \cup X_2 \cup X_4)$, $\rk(X_2 \cup X_3) = \rk(X_1 \cup X_2 \cup X_3)$, or $\rk(X_2 \cup X_4) = \rk(X_1 \cup X_2 \cup X_4)$. Remark that the case where some term in $R$ is equal to $r$ is contained in Case I or Case II, therefore all ranks are less than $r$ in the right hand side in the next Cases.

\paragraph{Case III:} All the terms in $R$ have cardinality at most $r-1$, and hence their rank is equal to their cardinality. It is clear that the cardinality function satisfies the Ingleton inequality, for $|X| = \dim(V_X)$ for all $X \subseteq E$, where $V_X$ is the subspace generated by the unit vectors $\{e_i: i \in X\}$.

\paragraph{Case IV:} Some terms in $R$ have rank $r-\frac{1}{3}$, say $1 \leq k \leq 5$ of them. If $k \leq 3$, then $\rk(X_1 \cup X_2 \cup X_3) = r \leq |X_1 \cup X_2 \cup X_3| - 1$ or $\rk(X_1 \cup X_2 \cup X_4) = r \leq |X_1 \cup X_2 \cup X_4| - 1$. This holds since one of the two terms in $L$ must have a greater rank than one of the $k$ terms in $R$ with rank $r-\frac{1}{3}$. Then the term in $L$ properly contains the according one in $R$, and its cardinality is at least $r+1$. Let us denote
\begin{eqnarray*}
    \bar{R} &:=& |X_1| + |X_2| + |X_1 \cup X_2 \cup X_3| + |X_1 \cup X_2 \cup X_4| + |X_3 \cup X_4|,\\
     \bar{L} &:=& |X_1 \cup X_2| + |X_1 \cup X_3| + |X_1 \cup X_4| + |X_2 \cup X_3| + |X_2 \cup X_4|.
\end{eqnarray*}
Thus
$$
    R = \bar{R} - \frac{k}{3} \geq \bar{R} - 1 \geq \bar{L} - 1 \geq L.
$$
If $k \in \{4,5\}$, then both $X_1 \cup X_2 \cup X_3$ and $X_1 \cup X_2 \cup X_4$ have rank $r$, and
$$
    R = \bar{R} - \frac{k}{3} \geq \bar{R} - 2 \geq \bar{L} - 2 \geq L.
$$
\end{proof}

\section{Conclusion} \label{sec:conclusion}

The representation of graphs in Section \ref{sec:graphs}) yields a couple of open questions.
\begin{itemize}
\item Given a graph, what is the largest alphabet over which it is not
representable?
\item Does Theorem \ref{th:graph} hold for $r$-uniform hypergraphs with $r>2$?
\end{itemize}

Also, after generalising  loops and parallel elements in Section \ref{sec:given_alphabet} and rank functions and closure operators in Section \ref{sec:rk_cl}, one wonders if more concepts from matroid theory could be generalised in the framework of combinatorial representations.

The relation with information theory via rank functions and especially the submodular inequality needs to be further investigated. Indeed, a wealth of non-Shannon inequalities have been discovered recently~\cite{ZY97,MMRV02,Zha03,DFZ06a}, see~\cite{Cha11} for a survey on this matter. However, it seems rather unclear how much more information can be drawn from all these new inequalities and how hard they are to manipulate. Similarly, non-Ingleton inequalities have been discovered for the dimension of intersections of linear subspaces~\cite{Kin11}. Once again, what conclusions can we draw from these inequalities?


\end{document}